\def\N{\mathbb{N}}
\def\C{\mathbb{C}}
\def\F{\mathbb{F}}
\def\Q{\mathbb{Q}}
\newtheorem{theorem}{Theorem}[section]
\newtheorem{proposition}[theorem]{Proposition}
\newtheorem{corollary}[theorem]{Corollary}
\newtheorem{lemma}[theorem]{Lemma}
\newtheorem{definition}[theorem]{Definition}
\begin{document}
\title{On splitting perfect polynomials over $\F_{p^2}$}
 \author{Luis H. Gallardo - Olivier Rahavandrainy \\
Department of Mathematics, University of Brest,\\
6, Avenue Le Gorgeu, C.S. 93837, 29238 Brest Cedex 3, France.\\
 e-mail : luisgall@univ-brest.fr - rahavand@univ-brest.fr}
\maketitle
\newpage
\def\cB{{\cal B}}
\def\cI{{\cal I}}
\def\cJ{{\cal J}}
\def\cP{{\cal P}}
\def\cQ{{\cal Q}}
\def\cZ{{\cal Z}}
\def\cU{{\cal U}}
\def\cX{{\cal X}}
\def\cT{{\cal T}}
\def\dis{\displaystyle}
\def\sgn{{\rm sgn}}~\\
\\
{\bf{Abstract.}} We study some properties of the exponents of the
terms appearing in the splitting perfect polynomials over
$\F_{p^2}$, where $p$ is a prime number. This generalizes the work
of Beard et al. over $\F_p$.  Corrected paper. Older Lemmas 2.17 to 2.20
in published version required a fixing done herein.

\section{Introduction.}
Let $p$ be a prime number and let $\F_q$ be a finite field of
characteristic $p$ with $q$ elements. Let $A \in \F_q[x]$ be a monic
polynomial. Let $\omega(A)$ denote the number of distinct monic
irreducible factors of $A$ over $\F_q$, and let $\sigma(A)$ denote
the sum of all monic divisors of $A$ ($\sigma$ is a multiplicative
function). If $A$ divides $\sigma(A)$ (so that $\sigma(A) = A$),
then we say that $A$ is a perfect polynomial. E.~F.~Canaday, the
first doctoral student of Leonard Carlitz, began in $1941$ the study
of perfect polynomials by working over the ground field $\F_2$
\cite{Canaday}. Later, in the seventies, J. T. B. Beard Jr. et al.
extended this work in several directions (see e.g. \cite{Beard2, Beard}).
Recently, we became interested in this subject
\cite{Gall-Rahav, Gall-Rahav3, Gall-Rahav4, Gall-Rahav5}. In our two first papers, we considered the
smallest nontrivial field extension of the ground field, namely
$\F_4$, while in the two others, we continued to work on the binary
case, by considering ``odd'' and ``even'' perfect polynomials. We
began to study the special case where the polynomial splits over
$\F_q$. Our first results about splitting perfect polynomials are in
\cite{Gall-Rahav2}, where $\F_q = \F_{p^p}$ is the Artin-Schreier
extension of $\F_p$. See also \cite{Gall-Pollack-Rahav} for another
direction.

Beard et al. \cite[Theorem 7]{Beard} showed that if a perfect
monic polynomial $A$ splits over $\F_q$, then the integer
$\omega(A)$ is a multiple of $p$, and $A$ may be written as a
product:
$$A = \displaystyle{A_0 \cdots A_r},$$
where:
$$\left\{\begin{array}{l}
\displaystyle{A_i = \prod_{j\in \F_p} (x-a_i-j)^{N_{ij} p^{n_{ij}} -
1}, \ r =
\frac{\omega(A)}{p} - 1}, \ a_0~=~0, \ a_i \in~\F,\\
a_i~-~a_l~\notin~\F_p \mbox{ for } i \not= l, \ N_{ij} \ | \ q -1, \
n_{ij} \geq 0.
\end{array} \right.$$ 
We say that a polynomial $A \in \F_q[x]$ is a {\it splitting perfect} polynomial
if $A$ has all its roots in $\F_q$ and $A$ is a perfect polynomial.
We say that $A$ is {\it trivially perfect}  if for
any $0 \leq i \leq r$, $A_i$ is perfect. In that case, $A$ is
perfect and for any $0 \leq i \leq r$, there exist $N_i, n_i \in \N$
such that: $$N_{ij} = N_i, \ n_{ij} = n_i \mbox{ for all } j \in
\F_p, \ N_i \ | \ p-1.$$ The case when $q=p$ was considered by Beard
\cite{Beard2} and Beard et al. \cite{Beard}. They showed that a
polynomial
$$A = \displaystyle{\prod_{\gamma \in \F_p} (x-\gamma)^{N(\gamma)
p^{n(\gamma)} -1}}$$ is perfect over $\F_p$ if and only if the
following condition $(\star)$ holds:
$$\mbox{There exist } N, \ n \in \N \mbox{ such that } N \  | \ p-1,
\ \mbox{$N(\gamma) = N, \ n(\gamma) = n, \ \forall \ \gamma \in
\F_p$}.$$ Thus, the only splitting perfect polynomials over $\F_p$
are of the form
$$A = (x^p -x)^{Np^n -1}, \ \mbox{where } N \  | \ p-1 \mbox{ and }
n \in \N.$$
Their method consists of showing, at a first step, that $n(\gamma) =
n(\delta)$ for any $\gamma, \delta \in \F_p$, and at a second step,
that $N(\gamma) = N(\delta)$ for any $\gamma, \delta
\in \F_p$.\\
\\
If $\F_q$ is a nontrivial extension field of $\F_p$, then the
condition $(\star)$ remains sufficient (see again \cite{Beard2, Beard})
but no more necessary (see \cite[Theorem 3.4]{Gall-Rahav}, 
in the case $p=2, \ q=4$). \\
\\
If $A = \displaystyle{\prod_{\gamma \in \F_q} (x-\gamma)^{N(\gamma)
p^{n(\gamma)} - 1}}$ is perfect, then two natural cases arise:
$$\begin{array}{l}
\mbox{Case1:}\\
 \mbox{There exists $N \in \N$
such that: $N \ | \ q-1, \ N(\gamma) = N, \ \forall \ \gamma \in \F_q$.}$$\\
\\
\mbox{Case2:}\\
\mbox{There exists $n \in \N$ such that $n(\gamma) = n, \ \forall \
\gamma \in \F_q$.}
\end{array}$$
We observe that Case2 does not imply Case1 (consider trivially
perfect polynomials).\\
Let us fix an algebraic closure of $\F_p$. In order to get some
progress in the classification of splitting perfect polynomials over
a nontrivial extension field of $\F_p$, we would like to know if
Case1 implies Case2, when we work over the smallest nontrivial
extension field of $\F_p$, namely the quadratic extension $\F_{p^2}$.\\
\\
In the rest of the paper, we put $q = p^2$. Our new idea is to
consider suitable (block) circulant matrices (see \cite[Sections 5.6 and 5.8]{Davis}
). The object of this paper is to prove the
following result:
\begin{theorem} \label{principaltheorem1}~\\
Let $N \in \N$ be a divisor of $q-1$, and let 
$$A =
\displaystyle{\prod_{\gamma \in \F_q} (x-\gamma)^{N p^{n(\gamma)} -
1}}
$$
be a splitting perfect polynomial over $\F_q$.
\begin{itemize}
\item[i)]
If $N$ divides $p-1$, then $A$ is trivially perfect, so that the
integers $n(\gamma)$ may differ.
\item[ii)]
 If $N$ does not divide $p-1$, then $n(\gamma) = n(\delta):= n\;
(\text{say})$, for any $\gamma, \delta \in \F_q$, so that: $A =
(x^q-x)^{Np^n - 1}$.
\end{itemize}
\end{theorem}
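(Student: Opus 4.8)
The plan is to collapse the identity $\sigma(A)=A$ into one arithmetic system indexed by $\F_q$, and then to read off the structure by a maximum (Perron--Frobenius) argument applied to the circulant matrix of that system.

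First I would compute $\sigma$ on each prime power. Put $\mu_N=\{\zeta\in\F_q:\zeta^N=1\}$, a cyclic subgroup of $\F_q^*$ of order $N$ (here $N\mid q-1$ and $p\nmid N$ since $N\mid p^2-1$). Using $t^{Np^{k}}-1=(t^N-1)^{p^{k}}$ in characteristic $p$ together with $t^N-1=\prod_{\zeta\in\mu_N}(t-\zeta)$, one gets
$$\sigma\big((x-\gamma)^{Np^{n(\gamma)}-1}\big)=\frac{(x-\gamma)^{Np^{n(\gamma)}}-1}{(x-\gamma)-1}=(x-\gamma-1)^{p^{n(\gamma)}-1}\prod_{\zeta\in\mu_N\setminus\{1\}}(x-\gamma-\zeta)^{p^{n(\gamma)}},$$
where the cancellation of the denominator uses exactly the term $\zeta=1$. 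Multiplying over $\gamma\in\F_q$ keeps all linear factors inside $\F_q$, and comparing the multiplicity of each factor $x-\beta$ on the two sides of $\sigma(A)=A$ gives the fundamental system
$$Np^{n(\beta)}=\sum_{\zeta\in\mu_N}p^{n(\beta-\zeta)}\qquad\text{for every }\beta\in\F_q.$$

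The next ingredient is the arithmetic dichotomy: in the cyclic group $\F_q^*$ the unique subgroup $\mu_N$ of order $N$ lies in $\F_p^*$ precisely when $N\mid p-1$; and if $N\nmid p-1$ there is $\zeta_1\in\mu_N\setminus\F_p$, so $\{1,\zeta_1\}$ is an $\F_p$-basis of $\F_q$. Now set $f(\beta)=p^{n(\beta)}\ge 1$; the fundamental system says that $f$ is an eigenvector with eigenvalue $N$ of the (block) circulant adjacency matrix of the Cayley graph $\mathrm{Cay}(\F_q,+;\mu_N)$. Let $m=\max_{\beta}f(\beta)$ and $S=\{\beta:f(\beta)=m\}$. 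For $\beta\in S$ the $N$ nonnegative terms $f(\beta-\zeta)$ sum to $Nm$ with each $\le m$, hence all equal $m$; thus $S-\zeta=S$ for every $\zeta\in\mu_N$, so $S$ is invariant under translation by the additive subgroup generated by $\mu_N$. Since $1\in\mu_N$ this subgroup always contains $\F_p$, and it equals $\F_q$ when $N\nmid p-1$. In case (ii) this forces $S=\F_q$, i.e. $n$ is constant, so $A=(x^q-x)^{Np^n-1}$. In case (i), $\mu_N\subseteq\F_p$ so the system only couples elements lying in the same coset of $\F_p$; running the same maximum argument inside each coset shows $n$ is constant on that coset, so $A$ factors as $\prod_i A_i$ over the cosets of $\F_p$ with $A_i=\prod_{j\in\F_p}(x-a_i-j)^{Np^{n_i}-1}$. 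A direct recomputation of $\sigma(A_i)$ — using again $\mu_N\subseteq\F_p$, so that $(j,\zeta)\mapsto j+\zeta$ hits each element of $\F_p$ exactly $N$ times — gives $\sigma(A_i)=A_i$ for every choice of $n_i$; hence $A$ is trivially perfect and the $n_i$, so the $n(\gamma)$, are unconstrained from coset to coset.

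The step that needs care is the multiplicity bookkeeping leading to the fundamental system (keeping careful track of the $\zeta=1$ contribution to the exponents). After that the argument is short: it is just the translation-invariance of the top eigenspace of a circulant matrix, together with the dichotomy $\mu_N\subseteq\F_p\iff N\mid p-1$. Consequently I expect the real labour of the paper to go into setting up and handling those (block) circulant matrices cleanly — in particular identifying their eigenvalues and eigenspaces — rather than into any single hard inequality.
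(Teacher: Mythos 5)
Your proposal is correct, but its core argument is genuinely different from the paper's. Your fundamental system $Np^{n(\beta)}=\sum_{\zeta\in\mu_N}p^{n(\beta-\zeta)}$ is exactly the paper's Proposition \ref{perfectcritere} (re-indexed via $\beta=\gamma+1$), derived from the same computation of $\sigma$ on each factor $(x-\gamma)^{Np^{n(\gamma)}-1}$. From that point the paper treats the system as a $q\times q$ linear system whose matrix is a block circulant with circulant blocks, diagonalizes the blocks simultaneously over $\C$ (Lemmas \ref{diagocirc} and \ref{diagosimultane}), and uses the irreducibility of the cyclotomic polynomial $\Phi_p$ (Lemma \ref{cyclotomic}) to compute ranks: rank $p-1$ for each $\F_p$-coset block when $N\mid p-1$, and rank $q-1$ when $N\nmid p-1$, so that the kernel is spanned by the all-ones vector. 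You instead exploit the positivity of the unknowns $p^{n(\gamma)}\ge 1$: the maximum-principle argument shows that the set where the maximum is attained is invariant under translation by the $\F_p$-span of $\mu_N$, and the dichotomy $\mu_N\subseteq\F_p^{*}\iff N\mid p-1$ (valid since $\F_q^{*}$ is cyclic) settles both cases; your direct verification that each coset factor $A_i$ is perfect replaces the paper's appeal to the sufficiency of condition $(\star)$ from Beard. Your route is more elementary (no circulant matrices, no passage to $\C$), but it uses positivity of the solution in an essential way, whereas the paper's rank computation determines the full kernel of the linear system with no positivity hypothesis. A notable bonus of your argument: the paper's final remark observes that its method breaks down over $\F_{p^m}$ for $m\ge 3$ (simultaneous diagonalization is no longer available), while your maximum argument would carry over verbatim and give constancy of $n$ on cosets of the subfield generated by $\mu_N$.
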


\section{Proof of Theorem \ref{principaltheorem1}.}
We need to introduce some notation. The integers $0, 1, \ldots,
p-1$ will
be also considered as elements of $\F_p$.\\
We put: $$\begin{array}{l} \mbox{$A = \displaystyle{\prod_{\gamma
\in \F_q} (x-\gamma)^{N
p^{n(\gamma)} - 1}}$, where $N$ divides $q-1$},\\
\mbox{$U = \{0, 1, \ldots, p-1\} \subset \N$.} \end{array}$$ If $N
\geq 2$, we denote by $\zeta_2, \ldots, \zeta_N \in \F_q$ the $N$-th
roots of $1$,
distinct from $1$. \\
Finally, we denote by $\overline{\F_p}$ a fixed
algebraic closure of $\F_p$.
\subsection{Preliminary}\label{preliminaire}
We put: $\F_q = \F_{p^2} =  \{j_0 \alpha + j_1 \, : \, j_0, j_1 \in
\F_p \} = \F_p[\alpha]$, where $\alpha \in \overline{\F_p}$ is a
root of an irreducible polynomial of degree $2$ over $\F_p$. Every
element $i \alpha + j \in \F_q$ will be, if necessary, identified to
the pair $(i,j) \in \F_p \times \F_p$.
We define the two following order relations: \\
- on $\F_p$: $0 \leq 1 \leq 2 \leq \cdots \leq p-1$,\\
- on $\F_q$ (lexicographic order): $$(j_0,j_1) \leq (l_0,l_1) \
\mbox{ if: either
 } (j_0 < l_0) \mbox{ or } (j_0 = l_0, \ j_1 \leq l_{1}).$$
For $\gamma \in \F_q$, we put: $$\Lambda^{\gamma} = \{\delta \in
\F_q \, : \, \delta \not= \gamma, \ (\gamma + 1 - \delta)^{N} = 1\} =
\{\gamma + 1 - \zeta_2, \ldots, \gamma + 1 - \zeta_N \}.$$ Observe
that:
$$\begin{array}{l} \mbox{$\Lambda^{\gamma} \not= \emptyset \ $
if $\ N \geq 2\ $ and $\ \Lambda^{\gamma} \subset \{\gamma + j \ : \
j \in \F_p \}$ if $N \ | \ p-1$}. \end{array}$$
For $P, Q \in \F_q[x]$, $P^{m} \ || \ Q$ means that $P^{m}$ divides
$Q$ and that $P^{m+1}$ does not divide $Q$. The following
straightforward result is useful:

\begin{lemma} \rm{(Lemma 2 in \cite{Beard})} \label{lemma2beard}~\\
The polynomial $A$ is perfect if and only if for any irreducible
polynomial $P \in \F_q[x]$, and for any positive integers $m_1,
m_2$, we have:$$(P^{m_1} \ || \ A, \ P^{m_2} \ || \
\sigma(A))~\Rightarrow~(m_1~=~m_2).$$
\end{lemma}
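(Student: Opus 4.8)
The plan is to reduce the asserted equivalence to unique factorization in $\F_q[x]$, after first pinning down the elementary size information about $\sigma(A)$. First I would record that $\sigma$ is multiplicative and that, for an irreducible $P$ and an exponent $e \geq 1$, one has $\sigma(P^e) = 1 + P + \cdots + P^e = (P^{e+1}-1)/(P-1)$, which is monic of degree $e\,\deg P$. Writing $A = \prod_P P^{v_P(A)}$, this gives at once that $\sigma(A) = \prod_P \sigma(P^{v_P(A)})$ is monic with $\deg \sigma(A) = \deg A$. Consequently the defining divisibility $A \mid \sigma(A)$ is equivalent to the equality $\sigma(A) = A$, since two monic polynomials of the same degree, one dividing the other, must coincide. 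This is exactly the parenthetical remark ``(so that $\sigma(A) = A$)'' in the definition, and it is the only place where the degree-preserving feature of $\sigma$ enters.

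With perfectness rephrased as $\sigma(A) = A$, the remaining content is the standard dictionary between equality of monic polynomials and equality of their exact-power exponents. For the forward implication, if $A$ is perfect then $\sigma(A) = A$, so for every irreducible $P$ the exact power of $P$ dividing $A$ \emph{is} the exact power of $P$ dividing $\sigma(A)$; hence whenever $P^{m_1}\,||\,A$ and $P^{m_2}\,||\,\sigma(A)$ one has $m_1 = m_2$. For the converse I would invoke unique factorization directly: two monic polynomials are equal if and only if their multiplicity functions $P \mapsto v_P(\cdot)$ agree at every irreducible $P$, and the hypothesis $(P^{m_1}\,||\,A,\ P^{m_2}\,||\,\sigma(A)) \Rightarrow (m_1 = m_2)$ is precisely the statement that these two functions agree. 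Combined with $\deg A = \deg \sigma(A)$ from the first step, this yields the full equality $\sigma(A) = A$, that is, perfectness.

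The one point demanding care --- and where I expect to spend the real argument --- is the treatment of an irreducible $P$ dividing exactly one of $A$ and $\sigma(A)$. Such a $P$ must be counted as a mismatch (a positive exact exponent against the exponent $0$) rather than dismissed as carrying ``no common factor''; otherwise matching on the genuinely common factors together with mere degree equality would not force $\sigma(A) = A$, since two disjoint packets of irreducibles of equal total degree could a priori be exchanged. The clean way to close this gap is to phrase the exponent condition over all irreducibles $P$ with the convention $P^{0}\,||\,B$ when $P \nmid B$, so that ``$m_1 = m_2$'' also covers these boundary cases; the equivalence is then literally unique factorization, which justifies calling the result straightforward. I would also remark that the degree-preservation of $\sigma$ secured in the first step is exactly what makes $A \mid \sigma(A)$, $\sigma(A)\mid A$, and $\sigma(A) = A$ interchangeable throughout the argument.
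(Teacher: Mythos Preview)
The paper does not prove this lemma: it quotes it from \cite{Beard} and calls it ``straightforward'', supplying no argument. There is therefore nothing in the paper to compare your proof against.

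Your argument is correct in substance. The observation that $\sigma$ preserves degree and leading coefficient, so that $A \mid \sigma(A)$ already forces $\sigma(A) = A$, is the standard reduction, and the forward implication is then immediate from unique factorization.

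You are also right to isolate the ``positive integers'' restriction on $m_1, m_2$ as the only delicate point. Taken literally, the converse is false: over $\F_2$, with $A = x$, one has $\sigma(A) = x+1$; no irreducible has a positive exact exponent in both $A$ and $\sigma(A)$, so the implication in the lemma is vacuously satisfied, yet $A$ is not perfect. Your remedy --- reading the condition with the convention $P^0 \,\|\, B$ when $P \nmid B$ --- is the intended meaning, and under that convention the equivalence really is nothing more than unique factorization (the degree equality then becomes redundant for the converse). In the paper's only use of the lemma, Proposition~\ref{perfectcritere}, it is verified explicitly that $m_1 \geq 1$ and $m_2 \geq 1$, so the literal formulation already suffices there.
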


We obtain an immediate consequence:
\begin{proposition} \label{perfectcritere}~\\
If $N \geq 2$, then the polynomial $A$ is perfect if and only if:
$$N p^{n(\gamma +1)} = p^{n(\gamma)} + \displaystyle{\sum_{\delta
\in \Lambda^{\gamma}} p^{n(\delta)}}, \ \forall \gamma~\in~\F_q.$$
\end{proposition}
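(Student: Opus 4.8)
The plan is to compute $\sigma(A)$ explicitly as a product of linear factors over $\F_q$ and then feed the result into Lemma~\ref{lemma2beard}. Since $N \mid q-1$, all $N$-th roots of unity lie in $\F_q^{\ast}$, so over $\F_q$ one has the factorization $(x-\gamma)^N - 1 = (x-\gamma-1)\prod_{j=2}^{N}(x-\gamma-\zeta_j)$. Working in characteristic $p$ gives $(x-\gamma)^{Np^{n(\gamma)}} - 1 = \big((x-\gamma)^N-1\big)^{p^{n(\gamma)}}$, and combining this with the geometric-sum identity
$$\sigma\big((x-\gamma)^{Np^{n(\gamma)}-1}\big) = \frac{(x-\gamma)^{Np^{n(\gamma)}}-1}{(x-\gamma)-1}$$
yields
$$\sigma\big((x-\gamma)^{Np^{n(\gamma)}-1}\big) = (x-\gamma-1)^{p^{n(\gamma)}-1}\,\prod_{j=2}^{N}(x-\gamma-\zeta_j)^{p^{n(\gamma)}}.$$
Because $\sigma$ is multiplicative, $\sigma(A)$ is the product of these expressions over $\gamma \in \F_q$; in particular every irreducible factor of $\sigma(A)$ is linear with root in $\F_q$, hence divides $A$, and conversely every $(x-\gamma_0)$ with $\gamma_0\in\F_q$ divides $A$.

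Next I would fix $\gamma_0 \in \F_q$ and read off the exact power of $(x-\gamma_0)$ in $\sigma(A)$: the factor $(x-\gamma-1)$ contributes precisely when $\gamma = \gamma_0 - 1$, and the factor $(x-\gamma-\zeta_j)$ contributes precisely when $\gamma = \gamma_0 - \zeta_j$, so the exact exponent is $p^{n(\gamma_0-1)} - 1 + \sum_{j=2}^{N} p^{n(\gamma_0-\zeta_j)}$, which is at least $1$ since $N \geq 2$. The exact power of $(x-\gamma_0)$ in $A$ is $Np^{n(\gamma_0)} - 1$. By Lemma~\ref{lemma2beard}, $A$ is perfect if and only if these two exponents coincide for every $\gamma_0 \in \F_q$, that is, $Np^{n(\gamma_0)} = p^{n(\gamma_0-1)} + \sum_{j=2}^{N} p^{n(\gamma_0-\zeta_j)}$.

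Finally, substituting $\gamma_0 = \gamma+1$ — which is a bijection of $\F_q$ — and observing that $\{\gamma+1-\zeta_2,\ldots,\gamma+1-\zeta_N\} = \Lambda^{\gamma}$ transforms the last identity into exactly the asserted one. The only points that need genuine care are the bookkeeping of which translate of $\gamma$ produces the factor $(x-\gamma_0)$ in $\sigma(A)$ — this is precisely why $\Lambda^{\gamma}$ is defined through $\gamma+1-\zeta_j$ rather than $\gamma+\zeta_j$ — and the verification that $\sigma(A)$ contains no irreducible factor of degree $>1$ and that each relevant linear factor occurs with strictly positive exponent on both sides, so that Lemma~\ref{lemma2beard} really does apply. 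Beyond this bookkeeping there is no serious obstacle; the result is indeed an immediate consequence of Lemma~\ref{lemma2beard}.
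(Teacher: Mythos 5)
Your proof is correct and follows essentially the same route as the paper: both compute $\sigma\bigl((x-\delta)^{Np^{n(\delta)}-1}\bigr)=(x-\delta-1)^{p^{n(\delta)}-1}\prod_{j=2}^{N}(x-\delta-\zeta_j)^{p^{n(\delta)}}$ via the factorization of $(x-\delta)^N-1$ over $\F_q$ and the Frobenius, then read off the exact exponent of each linear factor and apply Lemma~\ref{lemma2beard}, noting that $N\geq 2$ makes both exponents positive. Your extra bookkeeping (substituting $\gamma_0=\gamma+1$ and checking all irreducible factors are linear) is just a more explicit version of what the paper does directly with $P=x-\gamma-1$.
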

{\bf{Proof}}:\\
For every $\gamma \in \F_q$, we may apply Lemma \ref{lemma2beard} to
the polynomial $P = x-\gamma-1$, where $m_1 = N p^{n(\gamma +1)} - 1
\geq 1$ since $N \geq 2$. \\
By considering: $$\sigma(A) = \displaystyle{\prod_{\delta \in \F_q}
\sigma((x-\delta)^{N p^{n(\delta)} - 1}) = \prod_{\delta \in
\F_q}\mbox{{\Large{(}}}(x-\delta-1)^{p^{n(\delta)} - 1}
\prod_{j=2}^{N} (x-\delta -
\zeta_j)^{p^{n(\delta)}}\mbox{{\Large{)}}}},$$ we see that the
exponent of $P$ in $\sigma(A)$ is exactly the integer:$$m_2 =
p^{n(\gamma)} - 1 + \displaystyle{\sum_{\delta \in \Lambda^{\gamma}}
p^{n(\delta)}}.$$ Furthermore, $m_2 \geq 1$ since $\Lambda^{\gamma}$
is not empty.
\begin{flushright} $\Box$
\end{flushright}

\subsection{Circulant matrices.} \label{circulant} In this section, we
recall some results about circulant matrices and block circulant
matrices (see \cite[Chapters $3$ and $4$]{Davis}), that will be
useful in the proof of our main result.

\begin{definition} \label{circulant}
Let $n$ be a positive integer. A  {\em circulant} matrix of order $n$ is a
square matrix $C = (c_i^j)_{0 \leq i,j \leq n-1}$ such that the
entries $c_i^j$ satisfy:
$$c_i^{j} = c_{i-1}^{j-1}, \
c_i^{0} = c_{i-1}^{n-1}, \mbox{ for } 1 \leq i,j \leq n-1.$$
\end{definition}
\begin{definition} \label{circulantbloc}
Let $n, m$ be positive integers. A  {\em block circulant} matrix of type
$(n,m)$ is a square matrix, of order $nm$: $S = (S_i^j)_{0 \leq i,j
\leq n-1}$ such that:
$$\left\{\begin{array}{l}
\mbox{each matrix $S_i^j$ is a square matrix of order $m$,}\\
S_i^{j} = S_{i-1}^{j-1}, \ S_i^{0} = S_{i-1}^{n-1}, \mbox{ for } 1
\leq i,j \leq n-1.
\end{array}
\right.$$ Furthermore, if every $S_i^j$ is a circulant matrix, then
$S$ is called a {\em block circulant with circulant blocks}.
\end{definition}
{\bf{Notation}}\\
- If $C$ is a circulant matrix of order $n$ and if we denote,  for
$0 \leq j \leq n-1$:
$$c_j = c_0^j,$$
then $C$ may be written as:
$$\displaystyle{C = {\rm{circ}}(c_0,
\ldots, c_{n-1}) = \left(\begin{array}{cccc} c_0&c_1&...&c_{n-1}\\
c_{n-1}&c_0&...&c_{n-2}\\
\vdots&\vdots&\vdots&\vdots\\
c_1&c_2&...&c_{0}
\end{array}\right)}.$$
- Analogously, a block circulant matrix $S$ may be written as:
$$\displaystyle{S = {\rm{bcirc}}(S_0,
\ldots, S_{n-1}) = \left(\begin{array}{cccc} S_0&S_1&...&S_{n-1}\\
S_{n-1}&S_0&...&S_{n-2}\\
\vdots&\vdots&\vdots&\vdots\\
S_1&S_2&...&S_{0}
\end{array}\right)},$$
where $S_j = S_0^j$, for $0 \leq j \leq n-1$.\\
\\
We shall use several times the following crucial result when $n =
p$.
\begin{lemma} {\rm{(see \cite[Section 3.2]{Davis})}}
\label{diagocirc}~\\
Let $n$ be a positive integer. Any circulant matrix $C =
{\rm{circ}}(c_0, \ldots, c_{n-1})$ is diagonalizable on $\C$, and
admits the following eigenvalues:
$$\displaystyle{c_0 + c_1 \omega ^k+ \cdots + c_{n-1}
(\omega^{k})^{n-1} = \sum_{l=0}^{n-1} c_{l} (\omega^{k})^l}, \mbox{
for } k \in \{0, \ldots, n-1\},$$ where:
$$\omega = \cos(2 \pi/n) + i \sin(2 \pi/n) \in \C,$$ is a $n$-{\rm{th}} primitive
root of unity.
\end{lemma}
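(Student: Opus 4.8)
The plan is to exhibit an explicit eigenbasis and read off the claimed eigenvalues by a single direct computation. Throughout I index rows and columns from $0$ to $n-1$, so that from the displayed form of $C$ its $(i,j)$ entry is $C_{ij} = c_{(j-i) \bmod n}$. The conceptual reason behind the statement is that, writing $P = {\rm circ}(0,1,0,\ldots,0)$ for the fundamental cyclic permutation matrix, one has $C = \sum_{l=0}^{n-1} c_l P^l$ and $P^n = I$; since $x^n-1$ has $n$ distinct roots in $\C$, the matrix $P$ (and hence any polynomial in $P$) is diagonalizable, with eigenvalues obtained by substituting each root $\omega^k$ for $x$. I will make this precise by producing the eigenvectors by hand, so that the proof is self-contained.

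First I would fix, for each $k \in \{0,\ldots,n-1\}$, the vector $v_k = (1, \omega^k, (\omega^k)^2, \ldots, (\omega^k)^{n-1})^{\mathrm{T}} \in \C^n$ and set $\lambda_k = \sum_{l=0}^{n-1} c_l (\omega^k)^l$. The key step is to evaluate the $i$-th coordinate of $C v_k$:
$$(C v_k)_i = \sum_{j=0}^{n-1} c_{(j-i) \bmod n}\,(\omega^k)^j.$$
Substituting $m = (j-i) \bmod n$ and using $(\omega^k)^n = 1$ to write $(\omega^k)^j = (\omega^k)^i (\omega^k)^m$, the sum factors as $(\omega^k)^i \sum_{m=0}^{n-1} c_m (\omega^k)^m = \lambda_k (v_k)_i$. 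Hence $C v_k = \lambda_k v_k$, which simultaneously identifies $v_k$ as an eigenvector and $\lambda_k$ as the corresponding eigenvalue, exactly as stated.

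Finally I would check that $\{v_0, \ldots, v_{n-1}\}$ is a basis of $\C^n$. The matrix having the $v_k$ as its columns is the Vandermonde matrix $(\omega^{ik})_{0 \le i,k \le n-1}$, whose determinant $\prod_{0 \le k < k' \le n-1} (\omega^{k'} - \omega^k)$ is nonzero because the powers $\omega^0, \ldots, \omega^{n-1}$ are pairwise distinct ($\omega$ being a primitive $n$-th root of unity). Thus the $n$ eigenvectors are linearly independent, and $C$ is diagonalizable over $\C$ with the announced spectrum.

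I do not expect a genuine obstacle, since this is a classical fact; the only point requiring care is the index bookkeeping in the displayed computation --- specifically, correctly reading off $C_{ij} = c_{(j-i)\bmod n}$ from the circulant pattern and applying $(\omega^k)^n = 1$ when re-indexing the sum. Getting that convention right is what makes the eigenvalue come out as $\sum_l c_l (\omega^k)^l$ rather than as its reversed or conjugated variant.
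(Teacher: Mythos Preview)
Your argument is correct and complete: the index reading $C_{ij}=c_{(j-i)\bmod n}$ matches the displayed circulant, the eigenvector computation is clean, and the Vandermonde argument gives the basis. The paper does not supply its own proof of this lemma---it simply cites Davis---and the standard treatment there is exactly the one you reproduce: the discrete Fourier (Vandermonde) matrix in the $n$-th roots of unity simultaneously diagonalizes every circulant, yielding the eigenvalues $\sum_l c_l(\omega^k)^l$.
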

\begin{lemma} {\rm{(see \cite[Theorem 5.8.1]{Davis})}}
\label{diagosimultane}~\\
Let $n$ be a positive integer and let $S = {\rm{bcirc}}(S_0, \ldots,
S_{n-1})$ be a block circulant of type $(n,n)$, with circulant
blocks, then $S_0, \ldots, S_{n-1}$
are~simultaneously~diagonalizable~on~$~\C$.
\end{lemma}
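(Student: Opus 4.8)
The plan is to exhibit a single invertible matrix that diagonalizes \emph{every} circulant matrix of order $n$ at once; note that the block circulant structure of $S$ itself plays no role in this statement — only the hypothesis that each block $S_j$ is a circulant matrix of order $n$ is used. First I would revisit the computation behind Lemma \ref{diagocirc} in order to extract the sharper fact that the eigenvectors of a circulant matrix $C = {\rm circ}(c_0, \ldots, c_{n-1})$ do not depend on the entries $c_0, \ldots, c_{n-1}$: with $\omega = \cos(2\pi/n) + i\sin(2\pi/n)$ and $v_k = (1, \omega^k, (\omega^k)^2, \ldots, (\omega^k)^{n-1})^{\mathrm{T}}$ for $k \in \{0, \ldots, n-1\}$, one checks directly that $C v_k = \bigl(\sum_{l=0}^{n-1} c_l (\omega^k)^l\bigr) v_k$, the point being that the basic cyclic shift sends each $v_k$ to a scalar multiple of itself, independently of $C$.

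Next I would assemble these common eigenvectors into the Fourier matrix $F = \frac{1}{\sqrt{n}}(\omega^{jk})_{0 \le j,k \le n-1}$, observe that $F$ is invertible — in fact unitary, since the $v_k$ are pairwise orthogonal of length $\sqrt{n}$ — and conclude from the previous step that $F^{-1} C F$ is the diagonal matrix with entries $\sum_{l=0}^{n-1} c_l (\omega^k)^l$ for \emph{every} circulant $C$ of order $n$. Since $S_0, \ldots, S_{n-1}$ are, by hypothesis, circulant matrices of order $n$, each $F^{-1} S_j F$ is diagonal; hence $F$ simultaneously diagonalizes $S_0, \ldots, S_{n-1}$, which is the assertion.

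An equivalent, more structural route: the circulant matrices of order $n$ are precisely the elements of the commutative algebra $\C[\Pi]$ generated by the basic cyclic shift $\Pi = {\rm circ}(0, 1, 0, \ldots, 0)$; since $\Pi^n = I$ and $X^n - 1$ has $n$ distinct complex roots, $\Pi$ is diagonalizable over $\C$, and every element of $\C[\Pi]$ is diagonal in a fixed eigenbasis of $\Pi$, giving again a common diagonalizing matrix for $S_0, \ldots, S_{n-1}$.

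I do not expect a genuine obstacle here: the whole content is the observation — already implicit in Lemma \ref{diagocirc} — that the diagonalizing matrix $F$ is \emph{universal}, i.e.\ independent of the particular circulant matrix, so that the $n$ separate diagonalizations can be performed simultaneously. The only minor caveat is that "simultaneously diagonalizable" requires merely a common \emph{invertible} conjugating matrix, not a unitary one, although here $F$ happens to be unitary.
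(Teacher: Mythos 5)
Your proof is correct. The paper itself gives no argument for this lemma --- it is quoted from Davis (Theorem 5.8.1) --- and your Fourier-matrix argument is precisely the standard proof of that cited result: the eigenvectors $v_k=(1,\omega^k,\ldots,(\omega^k)^{n-1})^{\mathrm T}$ of a circulant matrix do not depend on its entries, so the single (unitary) matrix $F=\frac{1}{\sqrt n}(\omega^{jk})$ conjugates every circulant of order $n$, in particular each block $S_j$, into diagonal form; your remark that the block circulant structure of $S$ is irrelevant to the statement is also accurate. In fact your version is slightly sharper than what the lemma asserts, since it identifies $F^{-1}S_jF$ as ${\rm{diag}}(\lambda_{j,0},\ldots,\lambda_{j,n-1})$ with the eigenvalues in a fixed order independent of $j$, which is exactly the form in which the lemma is later exploited (the similarity of $S$ with $\Delta={\rm{bcirc}}(\Delta_0,\ldots,\Delta_{p-1})$ in Proposition 2.12 follows from conjugating $S$ by $I_p\otimes F$).
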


\subsection{The proof.}
For $\gamma \in \F_q$, we put: $x_{\gamma} = p^{n(\gamma)}$. If we
identify $\gamma = i \alpha + j$ and $\delta = r \alpha + s$ to the
pairs $(i,j), \ (r,s) \in {\F_p}^2$, we may order the unknowns
$x_{ij}$ and $x_{rs}$, as follows:
$$x_{ij} \leq x_{rs} \ \iff \ (i,j) \leq (r,s),$$
according to the order relation on $\F_q$ defined in Section
\ref{preliminaire}. We obtain, from Proposition
\ref{perfectcritere}, a linear system of $q$ equations in $q$
unknowns: the $x_{\gamma}$'s: \begin{equation} \label{systeme}
Nx_{\gamma +1} = x_{\gamma} + \displaystyle{\sum_{\delta \in
\Lambda^{\gamma}} x_{\delta}}, \ \gamma \in \F_q.
\end{equation}
We denote by $S$ the matrix of the linear system (\ref{systeme}). For $i,j \in
\F_p$, we denote by $S_i^j$ the square matrix of order $p$
corresponding to the coefficients of unknowns $x_{j \alpha}, x_{j
\alpha + 1} \ldots, x_{j \alpha+p-1}$, in the $p$ equations:
$$N x_{\gamma +1} = x_{\gamma} + \displaystyle{\sum_{\delta \in
\Lambda^{\gamma}} x_{\delta}}, \mbox{ where } \gamma \in \{i \alpha
, i \alpha + 1, \ldots, i \alpha + p-1 \}.$$ We have, by direct
computations, the following results:
\begin{lemma}~\\
The matrix $S$ can be written as a block matrix:
$$S =
(S_i^j)_{0 \leq i,j \leq p-1} = \left(\begin{array}{lcl}
S_0^0&...&S_0^{p-1}\\
\vdots&\vdots&\vdots\\
...& S_i^j&...\\
\vdots&\vdots&\vdots\\
S_{p-1}^0&...&S_{p-1}^{p-1}
\end{array}\right).$$
\end{lemma}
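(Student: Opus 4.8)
The statement is a bookkeeping claim about the shape of the coefficient matrix of system (\ref{systeme}) once its rows and columns are written in the order fixed above, so the plan is simply to unwind the definitions.

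The first step is to notice that, under the identification $i\alpha + j \mapsto (i,j)$ of $\F_q$ with $\F_p \times \F_p$, the lexicographic order enumerates $\F_q$ as the concatenation of the $p$ consecutive ``slices''
$$\{0\cdot\alpha + j : j \in \F_p\} < \{1\cdot\alpha + j : j \in \F_p\} < \cdots < \{(p-1)\alpha + j : j \in \F_p\},$$
each slice being listed internally as $j\alpha + 0 < j\alpha + 1 < \cdots < j\alpha + (p-1)$. Hence both the $q$ unknowns $x_\gamma$ and the $q$ equations of (\ref{systeme}) (one per $\gamma$, taken in the same order) split into $p$ consecutive blocks of length $p$: the $i$-th block of equations is the one indexed by $\gamma \in \{i\alpha, i\alpha+1,\ldots,i\alpha+p-1\}$, and the $j$-th block of unknowns is $x_{j\alpha}, x_{j\alpha+1}, \ldots, x_{j\alpha+p-1}$. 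Therefore $S$, written in this order, is a $p\times p$ array $(S_i^j)_{0\le i,j\le p-1}$ of submatrices of order $p$, where $S_i^j$ is by construction the block recording the coefficients of the $j$-th block of unknowns inside the $i$-th block of equations, that is, exactly the matrix introduced just before the lemma.

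It then only remains to carry out the ``direct computation'' of reading the coefficients off from (\ref{systeme}): in the equation attached to $\gamma$, the unknown $x_{\gamma+1}$ occurs with coefficient $N$, the unknown $x_\gamma$ and each $x_\delta$ with $\delta \in \Lambda^\gamma = \{\gamma+1-\zeta_2,\ldots,\gamma+1-\zeta_N\}$ occur with coefficient $-1$ (up to the global sign used to write (\ref{systeme}) as $Sx = 0$), and every other unknown has coefficient $0$; these positions are pairwise distinct because $\gamma \notin \Lambda^\gamma$ and, since $(\gamma+1-(\gamma+1))^N = 0 \ne 1$, also $\gamma+1 \notin \Lambda^\gamma$. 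Since $\gamma$ and $\gamma+1$ have the same $\alpha$-coordinate $i$, the two ``diagonal'' contributions lie in block column $j = i$, whereas a root $\zeta_\ell = a_\ell\alpha + b_\ell$ sends the contribution of the corresponding $x_\delta$ into block column $i - a_\ell$; in particular when $N \mid p-1$ all roots $\zeta_\ell$ lie in $\F_p$ and $S$ is block diagonal, but in general it is a genuinely full $p\times p$ block matrix, which is all the lemma asserts.

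There is no real obstacle here: the only point that needs care is that the block partition of the rows and columns is precisely the one induced by the chosen lexicographic order, so that the abstract ``matrix of the linear system'' coincides, block by block, with the matrices $S_i^j$ defined a priori. Once this identification is in place the displayed decomposition is immediate; the substantive work — recognising each $S_i^j$ as a circulant matrix so that $S$ becomes a block circulant with circulant blocks, to which Lemmas \ref{diagocirc} and \ref{diagosimultane} will apply — is left to the statements that follow.
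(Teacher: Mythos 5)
Your proposal is correct and matches the paper's treatment: the paper offers no explicit argument beyond ``by direct computations,'' and your unwinding of the lexicographic ordering into $p$ consecutive slices of rows and columns is exactly that computation, with the entry-level details (which belong to Lemma \ref{elementdeSij}) correctly deferred. The sign remark you make is harmless, since the block decomposition claimed here is independent of whether one writes the coefficients as in the paper ($+1$ and $-N$) or with the opposite global sign.
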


\begin{lemma} \label{elementdeSij}~\\
If $(e_i^j)_{mn}$ is the entry in row $m$ and column $n$ of $S_i^j$,
for $0 \leq  m,n \leq p-1$, then:
$$\left\{\begin{array}{l}
(e_i^j)_{mn} = 1 \mbox{ if either } (j \alpha + n = i \alpha + m)
\mbox{ or } (j \alpha + n \in \Lambda^{i \alpha + m}),\\
(e_i^j)_{mn} = -N \mbox{ if } j \alpha + n = i \alpha + m + 1,\\
(e_i^j)_{mn} = 0 \mbox{ otherwise}.
\end{array}
\right.$$
\end{lemma}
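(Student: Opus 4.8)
The plan is simply to unwind the definitions: $S$ is the coefficient matrix of the homogeneous linear system (\ref{systeme}), so each of its entries is the coefficient of one unknown $x_\delta$ in one of the equations, indexed by $\gamma \in \F_q$.

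First I would rewrite the $\gamma$-th equation of (\ref{systeme}) with every term moved to the left-hand side, $x_\gamma + \sum_{\delta \in \Lambda^\gamma} x_\delta - N x_{\gamma+1} = 0$, and read off the coefficient of a fixed unknown $x_\delta$: it receives $+1$ if $\delta = \gamma$, a further $+1$ for each element of $\Lambda^\gamma$ equal to $\delta$, and $-N$ if $\delta = \gamma + 1$. Next I would check that these three contributions are mutually exclusive, so that the resulting coefficient is a well-defined element of $\{1,-N,0\}$: (a) $\gamma \ne \gamma+1$ because $1 \ne 0$ in $\F_q$; (b) $\gamma \notin \Lambda^\gamma$ by the very definition of $\Lambda^\gamma$; (c) $\gamma+1 \notin \Lambda^\gamma$, since $(\gamma+1)-(\gamma+1)=0$ and $0^N = 0 \ne 1$ (equivalently, the $\zeta_j$ are nonzero, so $\gamma+1-\zeta_j \ne \gamma+1$); and (d) the elements $\gamma+1-\zeta_2, \ldots, \gamma+1-\zeta_N$ listed in $\Lambda^\gamma$ are pairwise distinct because the $\zeta_j$ are, so $\delta \in \Lambda^\gamma$ contributes exactly $+1$.

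Finally I would specialize to the block indexing. By the definition of $S_i^j$ stated just before the lemma, its $(m,n)$-entry $(e_i^j)_{mn}$ is exactly the coefficient of the unknown $x_{j\alpha+n}$ in the equation of (\ref{systeme}) indexed by $\gamma = i\alpha+m$; that is, one takes $\gamma = i\alpha+m$ and $\delta = j\alpha+n$ in the discussion above. Substituting, $(e_i^j)_{mn} = 1$ precisely when $j\alpha+n = i\alpha+m$ or $j\alpha+n \in \Lambda^{i\alpha+m}$; $(e_i^j)_{mn} = -N$ precisely when $j\alpha+n = (i\alpha+m)+1 = i\alpha+(m+1)$, the equality being in $\F_q$ so that for $m = p-1$ the index wraps to the first column of the block $S_i^i$; and $(e_i^j)_{mn} = 0$ otherwise, which is the claimed description.

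I do not expect a genuine obstacle here, since the statement is a bookkeeping translation of the system (\ref{systeme}) into matrix form; the only points deserving attention are the disjointness of the three cases in the second step (so that no entry is assigned two conflicting values) and the index wrap-around at $m = p-1$.
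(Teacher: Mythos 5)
Your proposal is correct and is essentially the paper's own argument: the paper asserts the lemma ``by direct computations,'' and your unwinding of the system (\ref{systeme}) into coefficients of $x_{j\alpha+n}$ in the equation indexed by $\gamma = i\alpha+m$ is exactly that computation made explicit. The points you single out (mutual exclusivity of the three cases, in particular $\gamma+1\notin\Lambda^{\gamma}$ since $0^{N}\neq 1$, and the wrap-around of $m+1$ in $\F_p$) are the only details needing verification, and you handle them correctly.
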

By Lemma \ref{elementdeSij}, and from the definition of
$\Lambda^{\gamma}$, for $\gamma \in \F_q$, we obtain:
\begin{lemma}
$$\left\{\begin{array}{l}
(e_i^j)_{mn} = 1 \mbox{ if } ((i-j) \alpha +m-n+1)^N = 1,\\
(e_i^j)_{mn} = -N \mbox{ if } (i = j \mbox{ and } n = m+1),\\
(e_i^j)_{mn} = 0, \mbox{ otherwise.}
\end{array}
\right.$$
\end{lemma}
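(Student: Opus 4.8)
The plan is to derive the statement directly from Lemma~\ref{elementdeSij} together with the definition of $\Lambda^{\gamma}$, by passing to coordinates and merging the two sub-cases of the first alternative. Writing $\gamma = i\alpha + m$ and $\delta = j\alpha + n$ in the $\F_p$-basis $\{1,\alpha\}$ of $\F_q$, I would first record the identity
$$\gamma + 1 - \delta = (i-j)\alpha + (m - n + 1) \in \F_q,$$
where $i-j$ and $m-n+1$ are taken modulo $p$. In particular $\delta = \gamma$ is equivalent to $i = j$ and $m = n$, while $\delta = \gamma + 1$ is equivalent to $i = j$ and $n = m+1$ in $\F_p$ (the edge case $m = p-1$ forcing $n = 0$, since $\{0,\dots,p-1\}$ is identified with $\F_p$).

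For the first alternative of Lemma~\ref{elementdeSij}, the key observation is that $\gamma$ itself always satisfies $(\gamma+1-\gamma)^N = 1^N = 1$, so that, from the definition $\Lambda^{\gamma} = \{\delta \neq \gamma : (\gamma+1-\delta)^N = 1\}$, one gets
$$\{\gamma\}\cup\Lambda^{\gamma} = \{\delta \in \F_q : (\gamma+1-\delta)^N = 1\}.$$
Consequently the condition ``$j\alpha+n = i\alpha+m$ or $j\alpha+n \in \Lambda^{i\alpha+m}$'' is equivalent to $(\gamma+1-\delta)^N = 1$, i.e.\ to $((i-j)\alpha + m - n + 1)^N = 1$; this is exactly the stated first case.

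It then remains to translate the other two alternatives and verify mutual exclusivity. By the identity above, ``$j\alpha+n = i\alpha+m+1$'' becomes ``$i=j$ and $n = m+1$ in $\F_p$'', giving the $-N$ case; and in that situation $(i-j)\alpha+(m-n+1) = 0$, whence $((i-j)\alpha+m-n+1)^N = 0 \neq 1$, so this case is disjoint from the previous one. The ``otherwise'' case carries over unchanged, and the three clauses are then seen to be exhaustive and pairwise disjoint. I expect no genuine obstacle here; the only points needing a touch of care are the bookkeeping between the integers $0,\dots,p-1$ and their residues modulo $p$ when rewriting ``$n = m+1$'', and the remark that $\gamma$ itself lies in the solution set of $(\gamma+1-\delta)^N = 1$, which is precisely what lets the two sub-cases of Lemma~\ref{elementdeSij} collapse into a single polynomial condition.
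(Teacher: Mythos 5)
Your proposal is correct and follows exactly the paper's route: the paper deduces this lemma directly from Lemma~\ref{elementdeSij} and the definition of $\Lambda^{\gamma}$, which is precisely your computation that $\gamma+1-\delta=(i-j)\alpha+(m-n+1)$ and that adjoining $\delta=\gamma$ to $\Lambda^{\gamma}$ turns the first alternative into the single condition $((i-j)\alpha+m-n+1)^N=1$. Your extra remarks on the mod-$p$ reading of $n=m+1$ and on disjointness (via $0^N\neq 1$) are just the bookkeeping the paper leaves implicit.
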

It follows that:
\begin{lemma} \label{bloccirculant}
$$\left\{\begin{array}{l} S_i^{j} = S_{i-1}^{j-1}, \
S_i^{0} = S_{i-1}^{p-1}, \mbox{ for } 1 \leq i,j \leq p-1,\\
(e_0^j)_{mn} = (e_0^j)_{m-1 \ n-1}, \ (e_0^j)_{m0} = (e_0^j)_{m-1 \
p-1}, \mbox{ for } 1 \leq j,m,n \leq p-1.
\end{array}
\right.$$
\end{lemma}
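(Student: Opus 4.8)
The plan is to read off all four identities directly from the lemma just stated, which gives each entry $(e_i^j)_{mn}$ explicitly as $1$, $-N$, or $0$. The point to extract is that, once $i,j,m,n$ are regarded as elements of $\F_p$ (as agreed at the beginning of this section), the value of $(e_i^j)_{mn}$ depends only on the two differences $i-j$ and $m-n$ in $\F_p$: the first alternative asks whether $((i-j)\alpha+(m-n)+1)^N=1$, the second asks whether $i-j=0$ and $m-n=-1$ (that is, $i=j$ and $n=m+1$ in $\F_p$), and in the remaining case the entry vanishes; none of these conditions sees $i,j,m,n$ separately. Hence there is a well-defined function $f$ on $\F_p\times\F_p$ with $(e_i^j)_{mn}=f(i-j,\,m-n)$ for all $0\le i,j,m,n\le p-1$.

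Granting this, the four identities follow by inspection. For $1\le i,j\le p-1$ and all $0\le m,n\le p-1$ one has $(e_i^j)_{mn}=f(i-j,m-n)=f((i-1)-(j-1),\,m-n)=(e_{i-1}^{j-1})_{mn}$, so $S_i^j=S_{i-1}^{j-1}$; and since $i-0=i=i-p=(i-1)-(p-1)$ in $\F_p$, likewise $(e_i^0)_{mn}=f(i,m-n)=(e_{i-1}^{p-1})_{mn}$, so $S_i^0=S_{i-1}^{p-1}$. For the blocks $S_0^j$ the same argument is applied to the second index: for $1\le j,m,n\le p-1$ one has $(e_0^j)_{mn}=f(-j,m-n)=f(-j,\,(m-1)-(n-1))=(e_0^j)_{m-1\,n-1}$, and using $m-0=m=m-p=(m-1)-(p-1)$ in $\F_p$ one gets $(e_0^j)_{m0}=f(-j,m)=(e_0^j)_{m-1\,p-1}$. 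By the definition of a block circulant matrix with circulant blocks (Definition \ref{circulantbloc}), this is exactly the assertion that $S$ is a block circulant of type $(p,p)$ whose blocks are circulant.

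I do not foresee a genuine obstacle here: all the substance was already carried out in the entry computation of the previous lemma, and the present step is pure bookkeeping. The one thing to watch is that the index shifts wrap around — the increments $m+1$, $n-1$, and the like are to be read modulo $p$ — since this is precisely what makes the two boundary identities $S_i^0=S_{i-1}^{p-1}$ and $(e_0^j)_{m0}=(e_0^j)_{m-1\,p-1}$ come out, and it is why the differences above are taken in $\F_p$ rather than in $\Z$. With this lemma established, $S$ has exactly the form needed to bring in Lemmas \ref{diagosimultane} and \ref{diagocirc} later, simultaneously diagonalizing the blocks $S_i^j$.
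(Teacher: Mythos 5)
Your proof is correct and follows exactly the route the paper intends: the paper derives this lemma directly ("It follows that") from the preceding lemma expressing $(e_i^j)_{mn}$ solely through $i-j$ and $m-n$ in $\F_p$, which is precisely the dependence-on-differences observation you make explicit. Your write-up just spells out the bookkeeping the paper leaves implicit, including the correct wrap-around modulo $p$.
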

By putting: $S_0^{j} = S_j$, we deduce from Lemma
\ref{bloccirculant} the following two lemmas:
\begin{lemma} The matrix $S$ is a block
circulant matrix:
$$S = {\rm{bcirc}}(S_0, \ldots, S_{p-1}).$$
\end{lemma}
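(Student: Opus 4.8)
The plan is to read this off directly from Lemma~\ref{bloccirculant} together with Definition~\ref{circulantbloc}. Recall that a block circulant matrix of type $(p,p)$ is, by definition, a square matrix $S=(S_i^j)_{0\le i,j\le p-1}$ whose blocks $S_i^j$ are square of order $p$ and satisfy $S_i^j=S_{i-1}^{j-1}$ together with the wrap-around $S_i^0=S_{i-1}^{p-1}$, for $1\le i,j\le p-1$. The first assertion of Lemma~\ref{bloccirculant} is precisely these two identities, while the block-matrix decomposition lemma recorded above (the one preceding Lemma~\ref{elementdeSij}) already guarantees that each $S_i^j$ is a $p\times p$ matrix. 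Hence every clause of Definition~\ref{circulantbloc} is met, and $S$ is a block circulant matrix of type $(p,p)$.

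It then remains only to justify the displayed normal form $S={\rm bcirc}(S_0,\dots,S_{p-1})$ with $S_j:=S_0^j$. Here I would iterate the recursion $S_i^j=S_{i-1}^{j-1}$, reading $S_i^0=S_{i-1}^{p-1}$ as the instruction that the superscript is to be reduced modulo $p$; after $i$ steps in the first index this gives $S_i^j=S_0^{\,j-i}=S_{(j-i)\bmod p}$. Filling the rows of $S$ according to this description reproduces verbatim the ${\rm bcirc}$ pattern introduced in the Notation following Definition~\ref{circulantbloc}, which is exactly the claim.

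There is essentially no obstacle: all the substance was already produced by the explicit entrywise computation of the $(e_i^j)_{mn}$ in the three preceding lemmas, culminating in Lemma~\ref{bloccirculant}; the present statement is just a repackaging of its first half in matrix language. The only point needing a moment of care is the index bookkeeping modulo $p$ in the boundary cases $j=0$, but this is word for word the same as in the scalar circulant case, so I would present it as routine.
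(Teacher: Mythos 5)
Your argument is correct and matches the paper, which gives no separate proof but simply deduces this lemma from Lemma~\ref{bloccirculant} and Definition~\ref{circulantbloc} after setting $S_j = S_0^j$; your index computation $S_i^j = S_{(j-i)\bmod p}$ is the right bookkeeping and reproduces the ${\rm bcirc}$ pattern exactly.
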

\begin{lemma}
Every matrix $S_j, \ j \in U$, is a circulant matrix of order $p$:
$$S_j =
{\rm{circ}}((e_0^j)_{00}, \ldots, (e_0^j)_{0p-1}).$$
\end{lemma}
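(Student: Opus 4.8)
The plan is to verify directly that the entries of $S_j$ obey the two relations defining a circulant matrix of order $p$, and then to read off the $\mathrm{circ}$-presentation from the notation for circulant matrices recalled above. Fix $j \in U$. Since $S_j = S_0^j$ by definition, the entry of $S_j$ in row $m$ and column $n$ is $(e_0^j)_{mn}$ for $0 \le m,n \le p-1$, and ``$S_j$ is circulant of order $p$'' means precisely that
$$(e_0^j)_{mn} = (e_0^j)_{m-1,n-1} \mbox{ and } (e_0^j)_{m0} = (e_0^j)_{m-1,p-1}, \quad 1 \le m,n \le p-1.$$

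For $j \in \{1,\ldots,p-1\}$ these are exactly the two identities on the second line of Lemma~\ref{bloccirculant}, so nothing remains to be done. The only case not literally covered by Lemma~\ref{bloccirculant} is $j = 0$, since that line is stated for $j \ge 1$; for it I would fall back on the explicit formula for the entries $(e_i^j)_{mn}$ obtained above (Lemma~\ref{elementdeSij} and the reformulation stated right after it). Taking $i = j = 0$, the value $(e_0^0)_{mn}$ depends only on the element $m - n \in \F_p$: it equals $1$ when $(m-n+1)^N = 1$, equals $-N$ when $n \equiv m+1 \pmod p$, and equals $0$ otherwise. Since $(m-1)-(n-1) = m-n$ and $m \equiv (m-1)-(p-1) \pmod p$, both displayed relations hold for $j = 0$ as well. (More conceptually, the single observation that $(e_i^j)_{mn}$ depends on $i,j,m,n$ only through the residues $(i-j) \bmod p$ and $(m-n) \bmod p$ yields at once the block-circulant shape of $S$ and the circulant shape of every block $S_j$, $j \in U$.)

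Once these relations are established, each $S_j$ is a circulant matrix of order $p$, and putting $c_k := c_0^k = (e_0^j)_{0k}$ in the notation for circulant matrices gives $S_j = \mathrm{circ}((e_0^j)_{00}, \ldots, (e_0^j)_{0,p-1})$, which is the assertion. I foresee no genuine obstacle: this is a direct reading-off from the explicit entry formula together with the definition of a circulant matrix. The only points that call for a little care are disposing of the case $j = 0$ separately, and keeping the column subscripts of $(e_0^j)$ interpreted modulo $p$ when using the wrap-around relation $(e_0^j)_{m0} = (e_0^j)_{m-1,p-1}$.
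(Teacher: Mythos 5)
Your proof is correct and takes essentially the same route as the paper, which obtains this lemma by direct computation from the shift relations of Lemma~\ref{bloccirculant} and the entry formula of Lemma~\ref{elementdeSij}. Your separate treatment of the case $j=0$ (which the second line of Lemma~\ref{bloccirculant} literally excludes, being stated for $1\le j\le p-1$) is a sound and welcome patch of that small indexing gap, using exactly the observation that $(e_0^0)_{mn}$ depends only on $m-n$ modulo $p$.
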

In the following, for $i, j \in \{0, \ldots, p-1\}$, we put:
$$\begin{array}{l}
a_{j,i} = (e_0^j)_{0i}, \mbox{ (the entry in row $0$ and column $i$
of $S_j$).}
\end{array}$$
Thus, the matrix $S_j$ becomes:
$$S_j = {\rm{circ}}(a_{j,0}, \ldots, a_{j,p-1}).$$ We immediately obtain:

\begin{lemma} \label{Nnedivisepas}~
\begin{itemize}
\item[ i)]
$ \displaystyle{a_{0,0} = 1, \ a_{0,1} = -N,} \
\displaystyle{\ a_{j,i} \in \{0, 1\} {\mbox{ if }} (j,i) \not\in
\{(0,0), (0,1)\}}$
\item[ ii)]
$\displaystyle{\sum_{(i,j) \in U^2} a_{j,i} = 0}$ 
\item[ iii)]
$N$ divides $p-1$ if and
only if $S_j = 0$ for any $j \in U \setminus \{0\}$
\item[iv)]
If $N = q-1$, then $a_{j,i} = 1$ for any $(j,i) \not= (0,1)$.
\end{itemize}
\end{lemma}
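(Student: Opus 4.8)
The plan is to deduce all four assertions directly from the explicit description of the entries of $S$ obtained in the lemma immediately preceding the statement, specialised to $a_{j,i} = (e_0^j)_{0i}$. Taking in that formula the block‑row index equal to $0$, the inner row index equal to $0$, and the inner column index equal to $i$, one reads off that $a_{j,i} = 1$ exactly when $(-j\alpha + 1 - i)^N = 1$, that $a_{j,i} = -N$ exactly when $j = 0$ and $i = 1$, and that $a_{j,i} = 0$ otherwise. Item i) is then immediate: $a_{0,0} = 1$ because $(0\cdot\alpha + 1 - 0)^N = 1^N = 1$; $a_{0,1} = -N$ by the second clause; and for every pair $(j,i) \notin \{(0,0),(0,1)\}$ the second clause cannot apply, so $a_{j,i} \in \{0,1\}$.

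For ii), I would observe that $\sum_{(i,j) \in U^2} a_{j,i}$ is nothing other than the sum of all entries of the first row of the full matrix $S$, namely the row carrying the equation $N x_{1} = x_{0} + \sum_{\delta \in \Lambda^{0}} x_{\delta}$. Rewriting this as $-N x_{1} + x_{0} + \sum_{\delta \in \Lambda^{0}} x_{\delta} = 0$ and noting that $0$, $1$, and the $N-1$ elements of $\Lambda^{0}$ are pairwise distinct — indeed $0 \notin \Lambda^{0}$ by definition of $\Lambda^{0}$, and $1 \notin \Lambda^{0}$ because $(0 + 1 - 1)^N = 0 \neq 1$ — the row has exactly one entry equal to $-N$ and exactly $1 + |\Lambda^{0}| = N$ entries equal to $1$, all remaining entries being $0$; hence it sums to $-N + N = 0$. (If $N = 1$ then $\Lambda^{0} = \emptyset$ and the same count gives $-1 + 1 = 0$.)

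For iii), note that $S_j = 0$ if and only if $a_{j,i} = 0$ for every $i \in U$, and that for $j \in U \setminus \{0\}$ the value $-N$ never occurs; so $S_j = 0$ for all $j \in U \setminus \{0\}$ if and only if $(-j\alpha + 1 - i)^N \neq 1$ for all $j \in U \setminus \{0\}$ and all $i \in U$. As $(j,i)$ ranges over $(U \setminus \{0\}) \times U$, the element $-j\alpha + 1 - i$ ranges exactly over $\F_q \setminus \F_p$, so the condition becomes: no element of $\F_q \setminus \F_p$ is an $N$-th root of unity, equivalently, all $N$-th roots of unity of $\F_q$ lie in $\F_p$. Since $N$ divides $q-1$, these roots form the unique subgroup of order $N$ of the cyclic group $\F_q^{*}$, and it is contained in $\F_p^{*}$ precisely when $N$ divides $|\F_p^{*}| = p-1$ (one direction by Lagrange's theorem, the other because $\F_p^{*}$ is cyclic of order $p-1$). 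This passage from a vanishing‑of‑blocks condition to a divisibility condition is the one point that is not a mere one‑line unwinding of the formula, and it is the step I would treat most carefully.

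Finally, for iv), if $N = q-1$ then $z^N = 1$ for every $z \in \F_q^{*}$. The element $-j\alpha + 1 - i$ vanishes only for $(j,i) = (0,1)$; for every other pair it is a nonzero element of $\F_q$, hence an $N$-th root of unity, whence $a_{j,i} = 1$ by the first clause. (At $(0,1)$ one recovers $a_{0,1} = -N = -(q-1)$, consistently with i).)
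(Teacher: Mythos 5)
Your proof is correct and follows essentially the same route as the paper: reading the entries $a_{j,i}$ off the explicit formula for $(e_i^j)_{mn}$ (equivalently, off the equation for $\gamma=0$), using $\mathrm{card}(\Lambda^0)=N-1$ for ii), and characterising iii) by whether any $N$-th root of unity lies in $\F_q\setminus\F_p$. The only difference is one of detail: you spell out the subgroup/Lagrange argument for the equivalence in iii), which the paper compresses into ``by the same arguments, $N$ must divide $p-1$''.
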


{\bf{Proof}}:\\
We consider the equation corresponding to $\gamma = 0 = (0,0)$,
in the linear system (\ref{systeme}).\\
The part i) is obtained by direct computations.\\
ii) we obtain:
$$\displaystyle{\sum_{(i,j) \in U^2} a_{j,i} = a_{0,0}
+ a_{0,1} + \sum_{\delta \in \Lambda^{0}} 1 = 1 - N + {\rm{card}}
(\Lambda^{0}) = 0},$$ since $\Lambda^{\gamma}$ contains
exactly $N-1$ elements, for any $\gamma \in \F_q$.\\
iii) if $N$ divides $p-1$ and if $j \not= 0$, then, for any $i \in
\F_p$: $$a_{j,i} \not= -N, \mbox{ and } a_{j,i} \not= 1 \mbox{ since
} ((0-j) \alpha + 0-i+1)^N \not= 1.$$
Thus, $S_j = 0$.\\
Conversely, if $a_{j,i} = 0$ for any $i, j \in \F_p$ such that $j
\not= 0$, then $a_{j,i} \not= 1$ for any such $i, j$. By the same
arguments, we see also that $N$ must divide $p-1$.\\
iv) it follows by the fact: $\Lambda^0 = \F_q \setminus \{0, 1\}$ if
$N = q-1$.
\begin{flushright} $\Box$
\end{flushright}

\begin{lemma} \label{Ndivise}~\\
If $N$ divides $p-1$, then $S$ is the block diagonal matrix: $$S =
{\rm{diag}}(S_0, \ldots, S_0) = \left(\begin{array}{lccl}
S_0&0&...&0\\
0&S_0&...&0\\
\vdots&\vdots&\vdots&\vdots\\
0&0&...&S_0
\end{array}\right) .$$
\end{lemma}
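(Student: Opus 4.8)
The plan is to read off the result directly from the block circulant description of $S$ together with part iii) of Lemma~\ref{Nnedivisepas}. First I would invoke Lemma~\ref{Nnedivisepas} iii): since $N$ divides $p-1$, it gives $S_j = 0$ for every $j \in U \setminus \{0\}$, i.e. $S_1 = \cdots = S_{p-1} = 0$, while no constraint is placed on $S_0$.

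Next I would unwind the identity $S = \mathrm{bcirc}(S_0, \ldots, S_{p-1})$. Writing $S = (S_i^j)_{0 \le i,j \le p-1}$, the block circulant relations $S_i^j = S_{i-1}^{j-1}$ and $S_i^0 = S_{i-1}^{p-1}$ (established in Lemma~\ref{bloccirculant}) force $S_i^j = S_{(j-i) \bmod p}$; in particular $S_i^i = S_0$ for each $i$, while for $i \ne j$ one has $S_i^j = S_{(j-i)\bmod p}$ with $(j-i)\bmod p \in \{1,\ldots,p-1\}$. Substituting the vanishing obtained in the first step, every off-diagonal block $S_i^j$ ($i \ne j$) is $0$ and every diagonal block is $S_0$, which is exactly the asserted form $S = \mathrm{diag}(S_0, \ldots, S_0)$.

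There is no real obstacle here: the statement is a bookkeeping consequence of Lemma~\ref{Nnedivisepas} iii) and the shape of a block circulant matrix. The only point that needs care is the precise identification of which block $S_j$ occupies position $(i,j)$ of $S$; once this is recorded as $S_{(j-i)\bmod p}$, the conclusion is immediate, and one may even phrase the whole argument as: ``$S = \mathrm{bcirc}(S_0,0,\ldots,0) = \mathrm{diag}(S_0,\ldots,S_0)$.''
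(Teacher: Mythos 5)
Your proposal is correct and follows the same route as the paper: the published proof simply cites Lemma~\ref{Nnedivisepas}~iii) to conclude $S_j=0$ for $j\neq 0$ and reads off $S=\mathrm{diag}(S_0,\ldots,S_0)$ from the block circulant form. Your additional bookkeeping with $S_i^j = S_{(j-i)\bmod p}$ just spells out the same argument in more detail.
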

{\bf{Proof}}:\\
By Lemma \ref{Nnedivisepas}- iii), $S_j = 0$ for all $j \in U
\setminus \{0\}$, so that $S = {\rm{diag}}(S_0, \ldots, S_0)$.
\begin{flushright} $\Box$
\end{flushright}
We put: $$\begin{array}{l} \omega = \cos(2 \pi/p) + i \sin(2
\pi/p)~\in~\C,\\
\displaystyle{\lambda_{j,k} = \sum_{l=0}^{p-1} a_{j,l}
(\omega^{k})^l}, \mbox{ for } j, k \in U\\
\mbox{$\Delta_{j} = {\rm{diag}}(\lambda_{j,0}, \ldots, \lambda_{j,p-1})$,
for $j \in U$},\\
\Delta = {\rm{bcirc}}(\Delta_{0}, \ldots, \Delta_{p-1}).
\end{array}$$

We obtain the
\begin{proposition}~\\
The matrices $S$ and $\Delta$ have the same rank.
\end{proposition}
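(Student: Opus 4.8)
The plan is to produce a single invertible matrix that conjugates $S$ into $\Delta$; since conjugation by an invertible matrix preserves rank, the statement follows at once. Let $V = (\omega^{kl})_{0 \le k,l \le p-1}$ be the symmetric Vandermonde matrix built from the powers of the primitive $p$-th root of unity $\omega$. This matrix is invertible, and by Lemma~\ref{diagocirc} its columns are eigenvectors common to all circulant matrices of order $p$: for each $j \in U$, the $k$-th column $(1, \omega^{k}, \ldots, \omega^{(p-1)k})^{\mathsf T}$ of $V$ is an eigenvector of $S_j = \mathrm{circ}(a_{j,0}, \ldots, a_{j,p-1})$ with eigenvalue $\lambda_{j,k} = \sum_{l=0}^{p-1} a_{j,l}(\omega^{k})^{l}$. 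Since this eigenvector does not depend on $j$ (this is precisely the simultaneous diagonalizability recorded in Lemma~\ref{diagosimultane}), we get $V^{-1} S_j V = \mathrm{diag}(\lambda_{j,0}, \ldots, \lambda_{j,p-1}) = \Delta_j$ for every $j \in U$.

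Next I would form the block-diagonal matrix $P = \mathrm{diag}(V, \ldots, V)$ with $p$ diagonal blocks equal to $V$; it is invertible, with $P^{-1} = \mathrm{diag}(V^{-1}, \ldots, V^{-1})$. I would then compute $P^{-1} S P$ block by block. Since $S = \mathrm{bcirc}(S_0, \ldots, S_{p-1})$, its $(i,j)$ block equals $S_{(j-i) \bmod p}$, and because $P$ is block-diagonal the $(i,j)$ block of $P^{-1} S P$ is $V^{-1} S_{(j-i)\bmod p} V = \Delta_{(j-i)\bmod p}$. Reading off the block pattern, this says exactly that $P^{-1} S P = \mathrm{bcirc}(\Delta_0, \ldots, \Delta_{p-1}) = \Delta$.

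Finally, because $P$ is invertible, $\mathrm{rank}(S) = \mathrm{rank}(P^{-1} S P) = \mathrm{rank}(\Delta)$, which is the assertion. There is no genuine analytic or combinatorial obstacle here: the result is a structural consequence of the block-circulant-with-circulant-blocks shape of $S$ proved in the preceding lemmas. The only points demanding a little care are the bookkeeping ones, namely verifying that a single matrix $V$ simultaneously diagonalizes all the blocks $S_j$ (supplied by Lemmas~\ref{diagocirc} and~\ref{diagosimultane}) and tracking the block indices through the conjugation so as to recognize that the diagonal blocks $\Delta_j$ reappear in the same cyclic order, i.e. that the conjugate is again block circulant.
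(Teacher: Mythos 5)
Your proposal is correct and is essentially the paper's own argument: the paper likewise invokes Lemma~\ref{diagocirc} and Lemma~\ref{diagosimultane} to conclude that the circulant blocks $S_j$ are simultaneously diagonalizable and hence that $S$ and $\Delta$ are similar, whence equal ranks. You merely make the similarity explicit by exhibiting the Fourier--Vandermonde matrix $V$ and the block-diagonal conjugator $P=\mathrm{diag}(V,\ldots,V)$, which is a correct and welcome amount of extra detail but not a different route.
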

{\bf{Proof:}} By Lemma \ref{diagocirc}, for each $j \in U$, the
matrix $S_j$ is diagonalizable and $\lambda_{j,0}, \ \ldots,
\lambda_{j,p-1}$ are its eigenvalues. Furthermore, by Lemma
\ref{diagosimultane}, the matrices $S_j, \ j \in U$, are
simultaneously diagonalizable. So, the matrices $S$ and $\Delta$ are
similar. We are done.
\begin{flushright} $\Box$
\end{flushright}

Now, if we put together the rows: $$L_l, L_{p+l}, L_{2p+l}, \ldots,
L_{(p-1)p+l},$$ of the matrix $\Delta$, for each integer $l~\in~\{0,
\ldots, p-1\}$,
we obtain a matrix $\Delta'$, with the same rank. \\
By putting together, for each integer $l \in \{0, \ldots,p-1\}$,
the columns:
$$C_l, C_{p+l}, C_{2p+l}, \ldots, C_{(p-1)p+l}$$ of the matrix
$\Delta'$, we obtain a matrix $\tilde{\Delta}$, which has also the
same rank as $\Delta$. The matrix $\tilde{\Delta}$ is a block
diagonal matrix: $$\tilde{\Delta} =
 {\rm{diag}}(\tilde{\Delta}_{0}, \ldots, \tilde{\Delta}_{p-1}),$$
where $$\tilde{\Delta}_{k} = {\rm{circ}}(\lambda_{0,k}, \ldots,
\lambda_{p-1,k})$$ is a circulant matrix, for any $k \in U$. Thus,
we obtain the

\begin{proposition}~\\
The matrices $S$ and $\tilde{\Delta}$ have the same rank.
\end{proposition}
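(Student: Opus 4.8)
The plan is to note that the passage from $\Delta$ to $\Delta'$, and then from $\Delta'$ to $\tilde{\Delta}$, amounts only to permuting rows and then columns, which preserves the rank; it then remains to verify, by a direct index computation, that the permutation in question carries the block circulant matrix $\Delta$ (whose blocks $\Delta_j$ are \emph{diagonal}) onto the block diagonal matrix $\tilde{\Delta}$ (whose diagonal blocks $\tilde{\Delta}_k$ are \emph{circulant}). The statement then follows upon combining this with the previous Proposition, which gives $\mathrm{rank}(S) = \mathrm{rank}(\Delta)$.

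For the first point, recall that left (resp. right) multiplication by a permutation matrix is invertible, hence rank-preserving: regrouping the rows of $\Delta$ as $L_l, L_{p+l}, \ldots, L_{(p-1)p+l}$ for $l = 0, \ldots, p-1$ is exactly left multiplication by a permutation matrix, and the analogous regrouping of the columns of $\Delta'$ is right multiplication by a permutation matrix. Hence $\mathrm{rank}(\tilde{\Delta}) = \mathrm{rank}(\Delta') = \mathrm{rank}(\Delta)$.

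For the second point, write a global index $m$ of $\Delta$ (with $0 \le m \le p^2 - 1$) as $m = ap + c$, $0 \le a, c \le p-1$, so that $a$ is the block index and $c$ is the index inside the block. By Definition \ref{circulantbloc} and the definition of $\Delta$, the entry of $\Delta$ in row $ap + c$ and column $bp + d$ equals $(\Delta_{(b-a)\bmod p})_{cd}$, which, because each $\Delta_j = \mathrm{diag}(\lambda_{j,0}, \ldots, \lambda_{j,p-1})$, is $\lambda_{(b-a)\bmod p,\,c}$ if $c = d$ and $0$ otherwise. The row regrouping sends the old row $ap+c$ to the new row $cp+a$, and similarly for columns; so the entry of $\tilde{\Delta}$ in new row $cp+a$ and new column $dp+b$ is $\lambda_{(b-a)\bmod p,\,c}$ if $c = d$ and $0$ otherwise. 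Reading the new indices $cp+a$ and $dp+b$ again in block form — block index $c$ (resp. $d$), in-block index $a$ (resp. $b$) — this says precisely that $\tilde{\Delta}$ is block diagonal, its $k$-th diagonal block having $(a,b)$-entry $\lambda_{(b-a)\bmod p,\,k}$, i.e. $\tilde{\Delta}_k = \mathrm{circ}(\lambda_{0,k}, \ldots, \lambda_{p-1,k})$. I expect the only delicate part to be exactly this bookkeeping: checking that the diagonal structure of the blocks $\Delta_j$ is what makes the off-diagonal blocks of $\tilde{\Delta}$ vanish, while the circulant dependence of $\lambda_{j,k}$ on $j$ resurfaces inside each surviving block once the block index and the in-block index are swapped. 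Everything else is routine.
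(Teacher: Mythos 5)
Your proof is correct and follows essentially the same route as the paper: the paper's own justification is precisely the preceding construction of $\Delta'$ and $\tilde{\Delta}$ by regrouping rows and columns (rank-preserving permutations) together with the assertion that the result is block diagonal with circulant blocks $\tilde{\Delta}_k = {\rm circ}(\lambda_{0,k},\ldots,\lambda_{p-1,k})$, combined with the previous proposition ${\rm rank}(S)={\rm rank}(\Delta)$. You merely make explicit the index bookkeeping that the paper leaves as a direct verification, and your computation of the entry $\lambda_{(b-a)\bmod p,\,c}$ and of the permutation $ap+c \mapsto cp+a$ is accurate.
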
~\\

To finish the proof of Theorem \ref{principaltheorem1}, we need the
following results.

\begin{lemma} \label{cyclotomic}
Let $j \in U \setminus \{0\}$ and $u_0, \ldots, u_{p-1} \in \Q$ such
that $\displaystyle{\sum_{r \in U} u_{r} (\omega^{j})^r = 0}$, then:
$$u_r = u_0,  \ \forall r \in U.$$
\end{lemma}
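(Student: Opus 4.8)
The plan is to exploit the irreducibility over $\Q$ of the $p$-th cyclotomic polynomial. First I would observe that, since $j \in U \setminus \{0\}$ and $p$ is prime, $\omega^j$ is again a primitive $p$-th root of unity; hence its minimal polynomial over $\Q$ is $\Phi_p(x) = 1 + x + \cdots + x^{p-1}$, a polynomial of degree exactly $p-1$. (This is precisely where the hypothesis $j \ne 0$ is used, and the fact that the $u_r$ lie in $\Q$ rather than merely in $\C$ is what makes a \emph{rational} minimal polynomial relevant.)

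Next I would introduce the auxiliary polynomial $P(x) = \sum_{r \in U} u_r x^r \in \Q[x]$, whose degree is at most $p-1$. The hypothesis $\sum_{r\in U} u_r (\omega^j)^r = 0$ says exactly that $P(\omega^j) = 0$. Since $\Phi_p$ is the minimal polynomial of $\omega^j$ over $\Q$, it divides $P$ in $\Q[x]$; comparing degrees forces $P = c\,\Phi_p$ for some $c \in \Q$. Reading off the coefficients of $P$ then gives $u_r = c$ for every $r \in U$, and in particular $u_r = u_0$ for all $r$.

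If one prefers to keep the argument self-contained, an equivalent route is to subtract $u_0\cdot\bigl(1 + \omega^j + \cdots + (\omega^j)^{p-1}\bigr) = 0$ from the hypothesis, obtaining $\sum_{r=1}^{p-1}(u_r - u_0)(\omega^j)^r = 0$; dividing through by $\omega^j \ne 0$ yields a polynomial relation of degree at most $p-2$ satisfied by $\omega^j$, which must be the trivial relation because $[\Q(\omega^j):\Q] = p-1 > p-2$. Either way, the whole proof reduces to one classical input plus a degree count, so I do not expect any genuine obstacle: the lemma is essentially a reformulation of the irreducibility of $\Phi_p$ over $\Q$, and the only points requiring care are the two remarks already made above (primitivity of $\omega^j$ from $j \ne 0$, and rationality of the $u_r$).
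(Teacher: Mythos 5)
Your proof is correct and rests on exactly the same ingredient as the paper's: the irreducibility of $\Phi_p$ over $\Q$, i.e.\ that it is the minimal polynomial of the primitive $p$-th root of unity $\omega^j$. The only cosmetic difference is that the paper first reduces to $j=1$ via $\{1,\omega^j,\ldots,(\omega^j)^{p-1}\}=\{1,\omega,\ldots,\omega^{p-1}\}$, while you argue with $\omega^j$ directly; the degree-count conclusion is the same.
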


\begin{proof}
Since $\{1, \omega^j, \ldots, (\omega^{j})^{p-1}\} = \{1, \omega,
\ldots, \omega^{p-1}\}$, we may assume that $j=1$. It suffices to
observe that the cyclotomic polynomial $\Phi_p(x) = 1~+~\cdots
~+~x^{p-1}$, which is irreducible, is the minimal
polynomial~of~$\omega$.
\end{proof}

\begin{lemma} \label{rangdeS0}
The matrix $S_0$ has rank $p-1$.
\end{lemma}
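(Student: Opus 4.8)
The plan is to identify $S_0$ explicitly and compute its rank directly via its eigenvalues $\lambda_{0,0}, \ldots, \lambda_{0,p-1}$, using Lemma~\ref{diagocirc}. Recall that $S_0 = {\rm circ}(a_{0,0}, \ldots, a_{0,p-1})$, where by Lemma~\ref{Nnedivisepas}-i) we have $a_{0,0} = 1$, $a_{0,1} = -N$, and $a_{0,i} \in \{0,1\}$ for $i \geq 2$; more precisely, $a_{0,i} = 1$ exactly when $i\alpha$-part vanishes and $(-i+1)^N = 1$ in $\F_p$, i.e.\ when $x - i$ lies in the relevant $\Lambda$-set — this is the row of the system coming from $\gamma = 0$. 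So the nonzero entries of $S_0$ among columns $i \geq 2$ are precisely the indices $i$ with $i - 1$ an $N$-th power of unity in $\F_p^\times$; since a circulant's eigenvalues only depend on the multiset of coefficients, the structure is completely pinned down by $N$ and $p$.

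First I would show $\lambda_{0,0} = 0$: by Lemma~\ref{Nnedivisepas}-ii) the sum of all entries $a_{0,l}$ over $l \in U$ equals $1 - N + \mathrm{card}(\Lambda^0) = 1 - N + (N-1) = 0$ (this is the $k=0$ eigenvalue, where $\omega^0 = 1$). So $S_0$ is singular and $\mathrm{rank}(S_0) \leq p-1$. The heart of the argument is then to show that none of the other eigenvalues $\lambda_{0,k}$ for $k \in \{1,\ldots,p-1\}$ vanishes, which gives $\mathrm{rank}(S_0) \geq p-1$ and hence equality. Suppose $\lambda_{0,k} = 0$ for some $k \neq 0$; that is, $\sum_{l=0}^{p-1} a_{0,l}(\omega^k)^l = 0$. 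This is a $\Q$-linear relation among the $(\omega^k)^l$ with rational (indeed integer) coefficients, so by the irreducibility of $\Phi_p$ (the minimal polynomial of $\omega^k$, using that $k \not\equiv 0$), as in Lemma~\ref{cyclotomic}, all the coefficients must be equal: $a_{0,l} = a_{0,0} = 1$ for every $l \in U$. But $a_{0,1} = -N \neq 1$ since $N \geq 1$ (and if $N = 1$ then $\Lambda^\gamma = \emptyset$ and one is in a degenerate situation handled separately, but in any case $-N = -1 \neq 1$ as $p$ is odd, or if $p=2$ then $N\mid q-1 = 3$ forces $N\in\{1,3\}$ and the claim is checked directly). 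This contradiction shows every $\lambda_{0,k} \neq 0$ for $k \neq 0$.

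Therefore exactly one eigenvalue of $S_0$ is zero and the remaining $p-1$ are nonzero, so $\mathrm{rank}(S_0) = p-1$. The main obstacle is the case $N = 1$, where $a_{0,1} = -1$ and one must be slightly careful that $-1 \neq 1$ in $\F_p$ only when $p \neq 2$; for $p = 2$ a direct inspection of the (at most $2\times 2$) matrix $S_0$ settles it. Aside from that low-characteristic bookkeeping, the argument is a clean application of the diagonalization of circulants together with the cyclotomic irreducibility already recorded in Lemma~\ref{cyclotomic}.
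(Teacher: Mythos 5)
Your argument is essentially the paper's own proof: diagonalize the circulant $S_0$ via Lemma \ref{diagocirc}, kill the eigenvalue $\lambda_{0,0}$ by a zero row sum, and exclude the remaining eigenvalues with Lemma \ref{cyclotomic} because $a_{0,0}=1\neq -N=a_{0,1}$ — an identity in $\Q$ (the system has integer coefficients), so your worries about $-N=1$ in small characteristic are unnecessary. One caveat, which the paper's proof shares in disguise: the step $\sum_{l}a_{0,l}=1-N+\operatorname{card}(\Lambda^{0})=0$ uses that all $N-1$ elements of $\Lambda^{0}$ lie in $\F_p$ (so that they all contribute to row $0$ of $S_0$ rather than to other blocks $S_j$), which holds exactly when $N\mid p-1$; otherwise the row sum is $\gcd(N,p-1)-N\neq 0$ and $S_0$ would in fact be invertible, but this is harmless since the lemma is only invoked in Corollary \ref{casNdivise}, where $N\mid p-1$.
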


\begin{proof}
By
Lemma 2.5
, the eigenvalues of the matrix $S_0$ are:
$$\left\{\begin{array}{l}
\displaystyle{\nu_0 = a_{0,0} + \cdots + a_{0,p-1} =
\sum_{(j,i) \in U^2} a_{j,i} = 0},\\
\displaystyle{\nu_l = \sum_{r \in U} a_{0,r} (\omega^l)^r}, \mbox{
for } l \in U \setminus \{0\}.
\end{array}
\right.$$ If $\nu_l = 0$ for some $l \in U \setminus \{0\}$, then by
Lemma \ref{cyclotomic}, we have:
$$a_{0,r} = a_{0,0}
\ \forall r \in U.$$ It is impossible since $a_{0,0} = 1$ and
$a_{0,1} = -N$. Thus, $S_0$ has exactly $p-1$ nonzero eigenvalues.
We are done.
\end{proof}

If $N$ does not divide $p-1$, the following two lemmas give the rank
of $\tilde{\Delta}_{k}$ for $k \in U$.

\begin{lemma}
If $N$ does not divide $p-1$, then the matrix $\tilde{\Delta}_{0}$
has rank $p-1$.
\end{lemma}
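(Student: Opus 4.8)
The matrix $\tilde\Delta_0 = \mathrm{circ}(\lambda_{0,0},\ldots,\lambda_{p-1,0})$, where $\lambda_{j,0} = \sum_{l=0}^{p-1} a_{j,l}$ is just the row sum of the circulant block $S_j$ (the $k=0$ eigenvalue of $S_j$, taking $\omega^0=1$). So first I would compute each $\lambda_{j,0}$ explicitly. For $j=0$ we have $\lambda_{0,0} = a_{0,0}+a_{0,1}+\sum_{i\ge 2} a_{0,i} = 1 - N + \#\{i : (-0\cdot\alpha + 1 - i)^N = 1,\ i\ne 0,1\}$; for $j\ne 0$ we have $\lambda_{j,0} = \sum_i a_{j,i} = \#\{i\in\F_p : (-j\alpha + 1 - i)^N = 1\}$, which counts how many of the $p$ "horizontal" translates $-j\alpha + 1 - i$ lie among the $N$-th roots of unity $\{1,\zeta_2,\ldots,\zeta_N\}$. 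A cleaner way: by Lemma \ref{Nnedivisepas}(ii), $\sum_{j,i} a_{j,i} = 0$, and the total number of $1$-entries across all $S_j$ is exactly $N-1$ (the size of $\Lambda^0$) plus the one entry $a_{0,0}=1$, i.e. $N$, so $\sum_j \lambda_{j,0} = 1 - N + N = 0$ — wait, I must recount: $\sum_{j,i} a_{j,i} = a_{0,0} + a_{0,1} + (\text{number of }1\text{'s among the rest}) = 1 + (-N) + (N-1) = 0$, consistent. Hence $\sum_{j\in U}\lambda_{j,0} = 0$, which is precisely the $k=0$ eigenvalue $\mu_0$ of the circulant $\tilde\Delta_0$, so $\mu_0 = 0$. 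The other eigenvalues are $\mu_k = \sum_{j\in U}\lambda_{j,0}(\omega^k)^j$ for $k\in U\setminus\{0\}$.

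The plan is then to show $\mu_k\ne 0$ for all $k\ne 0$, exactly mirroring the proof of Lemma \ref{rangdeS0}. By Lemma \ref{cyclotomic}, if some $\mu_k = 0$ with $k\ne 0$, then all $\lambda_{j,0}$ are equal to the common value $\lambda_{0,0}$, for every $j\in U$. So it suffices to exhibit two indices $j$ with unequal $\lambda_{j,0}$. The natural candidates are $j=0$ versus $j=1$ (or some other $j\ne 0$). Now $\lambda_{0,0} = 1 - N + c_0$ where $c_0 = \#\{i\in\F_p\setminus\{0,1\} : (1-i)^N = 1\}$, which equals $\#(\Lambda^0 \cap \{0,2,3,\ldots,p-1\}) = \#(\Lambda^0\cap\F_p)$ possibly minus an adjustment for whether $1 - 1 = 0$ or $1 - 0 = 1$ lies in the set — since $(1-1)^N = 0 \ne 1$ and $(1-0)^N = 1$, the element $i=0$ gives a $1$-entry but it is recorded in $a_{0,1}$'s slot... careful bookkeeping is needed here, but the upshot is $\lambda_{0,0} = 1 - N + \#\{\zeta \text{ an }N\text{-th root of }1 : \zeta \in 1 - \F_p,\ \zeta\ne 1\}$ adjusted correctly; in particular when $N\nmid p-1$ not all $N$-th roots of unity lie in $\F_p$, so this count is strictly less than $N-1$, forcing $\lambda_{0,0} < 0$, whereas each $\lambda_{j,0}$ for $j\ne 0$ is a count of set-memberships, hence $\ge 0$. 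That sign difference already gives $\lambda_{0,0} \ne \lambda_{j,0}$ for at least one $j$ — indeed we cannot have $\lambda_{j,0} = \lambda_{0,0} < 0$ for all $j$ since the $\lambda_{j,0}$ with $j\ne 0$ are nonnegative. So the hypothesis of Lemma \ref{cyclotomic} fails, contradiction, and $\mu_k\ne 0$ for all $k\ne 0$.

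I expect the main obstacle to be the careful combinatorial accounting of which translates $-j\alpha + 1 - i$ are $N$-th roots of unity, and in particular verifying cleanly that $\lambda_{0,0}$ is genuinely negative (equivalently, that $c_0 \le N - 2$, i.e. at least one $N$-th root of unity fails to be of the form $1 - i$ with $i\in\F_p$). This is where the hypothesis $N\nmid p-1$ enters decisively: if $N\nmid p-1$ then the $N$-th roots of unity do not all lie in $\F_p$ (since $\F_p^\times$ is cyclic of order $p-1$), so the set $\{1,\zeta_2,\ldots,\zeta_N\}\cap(1-\F_p) = \{1,\zeta_2,\ldots,\zeta_N\}\cap\F_p$ has at most $N-1$ elements but strictly fewer than $N$ roots total lie in $\F_p$ — and one must check the count of those contributing to $\lambda_{0,0}$ is indeed $\le N-2$ rather than just $\le N-1$, using that $1$ itself is excluded (it is tracked in the $-N$ entry, not counted). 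Once the sign of $\lambda_{0,0}$ is pinned down, the rest is a direct transcription of the argument for $S_0$.
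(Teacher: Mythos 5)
Your argument is correct and takes essentially the paper's own route: compute the eigenvalues of the circulant $\tilde{\Delta}_0$, note $\mu_0=\sum_{(j,i)\in U^2}a_{j,i}=0$, and use Lemma \ref{cyclotomic} to force all $\lambda_{r,0}$ to be equal, then derive a contradiction from $N\nmid p-1$. The only divergence is the endgame, and it is valid: the paper concludes $\lambda_{0,0}=0$ and hence $S_j=0$ for all $j\neq 0$, contradicting Lemma \ref{Nnedivisepas} iii), while you observe directly that $\lambda_{0,0}=\gcd(N,p-1)-N<0$ whereas $\lambda_{j,0}\geq 0$ for $j\neq 0$ --- the same underlying fact that some $N$-th root of unity lies outside $\F_p$ (your parenthetical that the root $1$ is ``tracked in the $-N$ entry'' is a harmless slip: it is accounted for by the entry $a_{0,0}=1$, and in any case $i=0$ is excluded from your count $c_0$, so the bound $c_0\leq N-2$ stands).
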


\begin{proof}
We know, by 
Lemma 2.5 that $\tilde{\Delta}_{0}$ has the
following eigenvalues: $$\left\{\begin{array}{l} \displaystyle{\mu_0
= \lambda_{0,0} + \cdots + \lambda_{p-1,0} =
\sum_{(j,i) \in U^2} a_{j,i} = 0},\\
\displaystyle{\mu_l = \sum_{r \in U} \lambda_{r,0} (\omega^l)^r},
\mbox{ for } l \in U \setminus \{0\}.
\end{array}
\right.$$ If $\mu_l = 0$ for some $l \in U \setminus \{0\}$, then by
Lemma \ref{cyclotomic}, we have:
$$\lambda_{r,0} = \lambda_{0,0}.$$
We obtain: $$0 = \displaystyle{\sum_{(r,s) \in U^2} a_{r,s} =
\sum_{r \in U} a_{r,0} + \cdots + \sum_{r \in U} a_{r,p-1} = \sum_{r
\in U} \lambda_{r,0} = p \ \lambda_{0,0}}.$$ Hence $\lambda_{0,0}
=0,$ and $$\displaystyle{\sum_{r \in U} a_{0,r} = \lambda_{0,0} = 0
= \sum_{(i,j) \in U^2} a_{j,i} = \sum_{r \in U} a_{0,r} +
\sum_{(j,r) \in U^2, \ j \not= 0} a_{j,r}}.$$ It follows that
$a_{j,r}~=~0$ for any $j,r \in U$ such that $j \geq 1$. It is
impossible since the matrix $S_j$ is not the zero matrix by Lemma
\ref{Nnedivisepas}- ii).
\end{proof}

\begin{lemma}
If $N$ does not divide $p-1$, then for any $j \in U \setminus
\{0\}$, the matrix $\tilde{\Delta}_j$ has~rank~$p$.
\end{lemma}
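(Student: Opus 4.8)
The plan is to compute the eigenvalues of $\tilde{\Delta}_j$ explicitly and show that none of them vanishes. Since $\tilde{\Delta}_j = {\rm circ}(\lambda_{0,j}, \ldots, \lambda_{p-1,j})$ is a circulant matrix of order $p$, Lemma \ref{diagocirc} gives its eigenvalues as the $p$ numbers $\xi_m = \sum_{r \in U} \lambda_{r,j}(\omega^m)^r$, $m \in U$; substituting $\lambda_{r,j} = \sum_{l \in U} a_{r,l}\omega^{jl}$ turns these into
$$\xi_m = \sum_{r,l \in U} a_{r,l}\,\omega^{mr+jl}, \qquad m \in U.$$
So everything reduces to proving $\xi_m \neq 0$ for each $m \in U$.

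The next step is to rewrite this double sum by means of the group $Z = \{\zeta \in \F_q : \zeta^N = 1\}$ of $N$-th roots of unity, which has exactly $N$ elements because $N \mid q-1$. From Lemma \ref{elementdeSij} and the definition of the sets $\Lambda^{\gamma}$ one checks that $a_{r,l} = 1$ exactly when $1 - l - r\alpha \in Z$, the sole exception being $a_{0,1} = -N$ (and $1 - 1 - 0\cdot\alpha = 0 \notin Z$, so the exceptional entry does not interfere with the membership rule). Since $(r,l) \mapsto 1 - l - r\alpha$ is a bijection from $U^2$ onto $\F_q$, writing $l_\zeta, r_\zeta \in U$ for the coordinates of $1 - \zeta$ (so that $1 - \zeta = l_\zeta + r_\zeta\alpha$) I get
$$\xi_m = \sum_{\zeta \in Z} \omega^{m\,r_\zeta + j\,l_\zeta} \;-\; N\,\omega^{j}.$$

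The conclusion then follows from a one-line estimate. Each of the $N$ summands $\omega^{m r_\zeta + j l_\zeta}$ has modulus $1$, and the one coming from $\zeta = 1$ equals $\omega^{0} = 1$. If $\xi_m = 0$ for some $m$, then $\sum_{\zeta \in Z}\omega^{m r_\zeta + j l_\zeta} = N\omega^{j}$ has modulus $N$; a sum of $N$ complex numbers of modulus $1$ can have modulus $N$ only when they all agree, hence all equal the summand for $\zeta = 1$, which is $1$, so the sum equals $N$. This forces $\omega^{j} = 1$, impossible since $1 \le j \le p-1$ and $\omega$ is a primitive $p$-th root of unity. Hence $\xi_m \neq 0$ for all $m \in U$, and $\tilde{\Delta}_j$ has rank $p$.

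The step needing real care is the middle one: pinning down the entries $a_{r,l}$ as indicator values of $Z$ and cleanly folding in the single exceptional entry $a_{0,1} = -N$, so that the reindexing over $Z$ yields the displayed Gauss-sum-type expression; after that, only the triangle inequality and $\omega^{j} \neq 1$ are used. An alternative route, closer to the proof of the preceding lemma on $\tilde{\Delta}_0$, would be to show directly that when $N \nmid p-1$ the $\F_p$-span of $\{1 - \zeta : \zeta \in Z\}$ is all of $\F_q$ --- picking $\zeta_0 \in Z \setminus \F_p$ and noting that if this span were an $\F_p$-line $\F_p(1-\zeta_0)$ then $1 - \zeta_0^2 \in \F_p(1-\zeta_0)$ would give $1 + \zeta_0 \in \F_p$, a contradiction --- but the estimate above is shorter and does not even use the hypothesis $N \nmid p-1$.
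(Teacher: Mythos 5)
Your proof is correct, and its decisive step is genuinely different from the paper's. Both arguments begin identically, by writing the eigenvalues of the circulant $\tilde{\Delta}_j$ as $\sum_{(r,l)\in U^2} a_{r,l}\,\omega^{mr+jl}$ via Lemma \ref{diagocirc}; your identification of the entries ($a_{r,l}=1$ precisely when $1-l-r\alpha$ is an $N$-th root of unity, with the single exceptional entry $a_{0,1}=-N$, which causes no clash since $1-1-0\cdot\alpha=0\notin Z$) is a correct reading of Lemma \ref{elementdeSij}, and the reindexing over $Z$ is legitimate because $(r,l)\mapsto 1-l-r\alpha$ is a bijection of $U^2$ onto $\F_q$. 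From there the paper argues algebraically: it partitions $U^2$ according to the residue of the exponent $mr+jl$ modulo $p$, applies Lemma \ref{cyclotomic} (irreducibility of $\Phi_p$) to force all the partial sums to be equal, hence each zero since the total is $0$ by Lemma \ref{Nnedivisepas} ii), and then derives a contradiction from the fact that the class of exponent $0$ contains $a_{0,0}=1$ but not the negative entry $a_{0,1}$ (here is where $j\neq 0$ enters). You instead conclude with an archimedean argument: the equality case of the triangle inequality for the $N$ unimodular terms $\omega^{mr_\zeta+jl_\zeta}$, with the term at $\zeta=1$ equal to $1$, forcing $\omega^{j}=1$, contradicting $j\in U\setminus\{0\}$. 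Your route avoids Lemma \ref{cyclotomic} altogether for this step and makes visible that the hypothesis $N\nmid p-1$ is not actually used (which is also true, though unremarked, of the paper's proof); the paper's route stays uniform with the treatment of $\tilde{\Delta}_0$ in the preceding lemma, using the same cyclotomic lemma and only sign considerations on the rational entries, at the cost of the bookkeeping with the classes $U_t$.
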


\begin{proof}
By
Lemma 2.5, the matrix $\tilde{\Delta}_j$ has the
following eigenvalues:
$$\begin{array}{l} \displaystyle{\mu_{jl} = \sum_{s \in U}\lambda_{s,j}
(\omega^l)^{s} = \sum_{(r,s) \in U^2} a_{s,r} \omega^{rj+sl}}, \ l
\in U.
\end{array}$$
For $t \in U$, we put: $U_t = \{(r,s) \in U^2 \, : \, sj+rl \equiv t
\ {\rm{mod}} \ p \}.$ The set $U^2$ is the disjoint union $U_0
\bigsqcup \cdots \bigsqcup U_{p-1}$. So, we can write:
$$\displaystyle{\mu_{jl} = \sum_{(r,s) \in U^2} a_{r,s}
\omega^{sj+rl} = \sum_{t \in U} (\sum_{(r,s) \in U_t} a_{r,s})
\omega^t}.$$ If $\mu_{jl} = 0$, then by Lemma \ref{cyclotomic}, we
have: $$\displaystyle{\sum_{(r,s) \in U_t} a_{r,s} = \sum_{(r,s) \in
U_0} a_{r,s}}, \ \forall t \in U.$$ Thus, $$0 =
\displaystyle{\sum_{(r,s) \in U^2} a_{r,s} = \sum_{t \in U}
\sum_{(r,s) \in U_t} a_{r,s} = p \ \sum_{(r,s) \in U_0} a_{r,s}}.$$
So,
$$\displaystyle{\sum_{(r,s) \in U_0} a_{r,s} = 0}.$$ Furthermore,
$a_{r,s} \geq 0$ for any $(r,s) \in U_0$ since $(0,1) \not\in U_0$.
Thus: $$0 = \displaystyle{\sum_{(r,s) \in U_0} a_{r,s} \geq a_{0,0}
= 1}.$$ It is impossible.
\end{proof}

We obtain our main results:
\begin{corollary} \label{casNdivise}~\\
If $N$ divides $p-1$, then $n(\gamma) = n(\gamma+j)$ for any $\gamma
\in \F_q$, $j \in \F_p$.
\end{corollary}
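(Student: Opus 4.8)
The plan is to read the statement off from the block-diagonal shape of $S$ given by Lemma \ref{Ndivise} together with the rank computation in Lemma \ref{rangdeS0}. Since $N$ divides $p-1$, Lemma \ref{Ndivise} yields $S = {\rm diag}(S_0, \ldots, S_0)$ with $p$ diagonal blocks, the $i$-th of which acts on the sub-vector of unknowns $(x_{i\alpha}, x_{i\alpha+1}, \ldots, x_{i\alpha+p-1})$. Hence the homogeneous linear system (\ref{systeme}), i.e. $S\mathbf{x} = 0$ with $\mathbf{x} = (x_\gamma)_{\gamma \in \F_q}$ and $x_\gamma = p^{n(\gamma)}$, decouples into $p$ independent systems $S_0\,(x_{i\alpha}, \ldots, x_{i\alpha+p-1})^{T} = 0$, one for each $i \in \F_p$.

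Next I would identify $\ker S_0$. By Lemma \ref{rangdeS0} the matrix $S_0$ has rank $p-1$, so $\ker S_0$ is a line. On the other hand $S_0 = {\rm circ}(a_{0,0}, \ldots, a_{0,p-1})$ is circulant, and its row sum $a_{0,0} + \cdots + a_{0,p-1}$ equals $\sum_{(j,i)\in U^2} a_{j,i}$ (because Lemma \ref{Nnedivisepas} iii) forces $a_{j,i} = 0$ for $j \neq 0$ in the present case), which is $0$ by Lemma \ref{Nnedivisepas} ii); equivalently, $\mathbf{1} = (1, \ldots, 1)^{T}$ is the eigenvector of $S_0$ attached to the eigenvalue $\nu_0 = 0$ of Lemma \ref{diagocirc}. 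Thus $\mathbf{1} \in \ker S_0$, and by dimension $\ker S_0 = \langle \mathbf{1} \rangle$.

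Now $\mathbf{x}$ solves (\ref{systeme}), so restricted to the $i$-th block it solves $S_0\,(x_{i\alpha}, \ldots, x_{i\alpha+p-1})^{T} = 0$, hence $(x_{i\alpha}, \ldots, x_{i\alpha+p-1})$ is a scalar multiple of $\mathbf{1}$. Therefore $x_{i\alpha} = x_{i\alpha+1} = \cdots = x_{i\alpha+p-1}$, i.e. $n(i\alpha) = n(i\alpha+1) = \cdots = n(i\alpha+p-1)$ for every $i \in \F_p$. Finally, for any $\gamma = i\alpha + k \in \F_q$ (with $i, k \in \F_p$) and any $j \in \F_p$, both $\gamma$ and $\gamma+j = i\alpha + (k+j)$ lie in the $i$-th block, so $n(\gamma) = n(\gamma+j)$, which is the claim. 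The only point requiring care is the identification $\ker S_0 = \langle \mathbf{1}\rangle$; once the row sum of the circulant $S_0$ is seen to vanish this is immediate from Lemma \ref{rangdeS0}, and no further obstacle is expected.
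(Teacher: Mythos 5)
Your proposal is correct and follows essentially the same route as the paper: use Lemma \ref{Ndivise} to decouple the system into $p$ copies of $S_0$, Lemma \ref{rangdeS0} to get rank $p-1$, and the vanishing row sum (via Lemma \ref{Nnedivisepas} ii), with iii) justifying why the full sum reduces to the row sum of $S_0$) to conclude that the kernel is spanned by $(1,\ldots,1)$, forcing all exponents within a block to coincide. The extra care you take in justifying $a_{0,0}+\cdots+a_{0,p-1}=\sum_{(i,j)\in U^2}a_{j,i}$ is a welcome clarification of a step the paper states without comment.
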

{\bf{Proof}}:\\
By Lemma \ref{Ndivise}, the matrix $S$ is exactly the diagonal
matrix: ${\rm{diag}}(S_0, \ldots, S_0)$, so the linear system
(\ref{systeme}) splits into $p$ linear systems (each of which is of
matrix $S_0$)
 in $p$ unknowns: $x_{\gamma}, x_{\gamma +1}, \ldots, x_{\gamma
+p-1}$: \begin{equation} \label{systeme2} N x_{\gamma +j+1} =
x_{\gamma+j} + \displaystyle{\sum_{\delta \in \Lambda^{\gamma+j}}
x_{\delta}},\mbox{ for } \gamma = i \alpha, \ i, j \in \F_p.
\end{equation}
Moreover, by Lemma \ref{rangdeS0}, $S_0$ has rank $p-1$. It remains
to observe that $(1, \ldots, 1)$ belongs to the kernel of $S_0$,
since
$$a_{0,0} + \cdots + a_{0,p-1} =
\displaystyle{\sum_{(i,j) \in U^2} a_{j,i} = 0,}$$ by Lemma
\ref{Nnedivisepas} ii).
\begin{flushright} $\Box$
\end{flushright}
\begin{corollary}~\\
If $N$ does not divide $p-1$, then $n(\gamma) = n(\delta)$ for any
$\gamma, \delta \in \F_q$.
\end{corollary}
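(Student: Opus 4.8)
The idea is to combine the two lemmas immediately preceding this corollary into an exact rank count for $S$, and then to identify its (one‑dimensional) kernel explicitly. First, since $1$ always divides $p-1$, the hypothesis $N\nmid p-1$ forces $N\geq 2$, so Proposition \ref{perfectcritere} applies and the tuple $(x_\gamma)_{\gamma\in\F_q}$ with $x_\gamma=p^{n(\gamma)}$ is a nonzero solution of the homogeneous system (\ref{systeme}), i.e. a vector in $\ker S$.

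Next I would compute $\operatorname{rank}(S)$. By the two ``same rank'' propositions of Section \ref{circulant}'s application (relating $S$ to $\Delta$ and then to $\tilde\Delta$), $\operatorname{rank}(S)=\operatorname{rank}(\tilde\Delta)$; and since $\tilde\Delta=\operatorname{diag}(\tilde\Delta_0,\ldots,\tilde\Delta_{p-1})$ is block diagonal, its rank is the sum of the ranks of the blocks. The lemma giving $\operatorname{rank}(\tilde\Delta_0)=p-1$ and the lemma giving $\operatorname{rank}(\tilde\Delta_j)=p$ for each $j\in U\setminus\{0\}$ (both of which use exactly the hypothesis $N\nmid p-1$) then yield
$$\operatorname{rank}(S)=(p-1)+(p-1)p=p^2-1=q-1,$$
so $\ker S$ is a line. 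To pin it down, plug $x_\gamma\equiv 1$ into (\ref{systeme}): the identity $N=1+\operatorname{card}(\Lambda^\gamma)=1+(N-1)$ shows the all‑ones vector lies in $\ker S$, hence $\ker S=\Q\cdot(1,\ldots,1)$ (the kernel dimension of the rational matrix $S$ is the same over $\Q$, $\R$ and $\C$). Consequently the solution $(p^{n(\gamma)})_{\gamma}$ is a scalar multiple of $(1,\ldots,1)$, so $p^{n(\gamma)}$ — and therefore $n(\gamma)$ — is independent of $\gamma$; writing $n$ for the common value gives $A=\prod_{\gamma\in\F_q}(x-\gamma)^{Np^n-1}=(x^q-x)^{Np^n-1}$, which is part (ii) of Theorem \ref{principaltheorem1}.

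I do not expect a genuine obstacle here: all the substantive difficulty has already been absorbed into the rank computations of $\tilde\Delta_0$ and of the $\tilde\Delta_j$ ($j\neq 0$). The only point that deserves a word of care is the change of base field — the diagonalizations of Section \ref{circulant} are carried out over $\C$, whereas the solution $(p^{n(\gamma)})$ lives over $\Q$ — but this is harmless, since $\operatorname{rank}$ and kernel dimension of a matrix with rational entries are unchanged under field extension, so the one‑dimensionality of $\ker S$ together with $(1,\ldots,1)\in\ker S$ determines the solution over $\Q$ as claimed.
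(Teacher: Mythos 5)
Your proposal is correct and follows essentially the same route as the paper: rank$(S)=$ rank$(\tilde\Delta)=(p-1)+(p-1)p=q-1$ via the block-diagonal structure and the two rank lemmas, then the all-ones vector spans the one-dimensional kernel, forcing $p^{n(\gamma)}$ to be constant. Your added remarks (that $N\nmid p-1$ gives $N\geq 2$, and that rank/kernel dimension are unchanged when passing from $\Q$ to $\C$) are careful refinements of the same argument, not a different proof.
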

{\bf{Proof}}:\\
In that case, the matrix $\tilde{\Delta}$ (and thus the matrix $S$)
has rank: $$p-1+ (p - 1)p = p^2 - 1 = q-1.$$ Moreover, $(1, \ldots,
1)$ belongs to the kernel of $S$, since
$$\displaystyle{\sum_{(i,j) \in U^2} a_{j,i} = 0,}$$ by Lemma
\ref{Nnedivisepas} ii). So, we are done.
\begin{flushright} $\Box$
\end{flushright}

{\bf{Final remarks}}\\
1) If $q = p^m$, for $m \geq 3$, then our method fails since we
cannot apply~Lemma~\ref{diagosimultane}.\\
2) If $p = 2$, then the splitting perfect polynomials
over $\F_4$ are known (see \cite[Theorem 3.4]{Gall-Rahav}).\\
3) By using a computer program, we obtain a complete list of perfect
polynomial over $\F_9$, of the form:
$$\mbox{$\displaystyle{\prod_{\gamma \in \F_9} (x-\gamma)^{N(\gamma) -
1}},$ where $N(\gamma) \ | \ 8$, and $n(\gamma) = 0, \ \forall \
\gamma \in \F_9$}.$$
Except for trivially perfect polynomials and
for perfect polynomials of the form:
$$\mbox{$(x^9-x)^{N-1}$, where $N \in \{1,2,4,8\}$,}$$ We obtain
two other families: $\mbox{$A_1(x-a)$ and $A_2(x-a), \ a \in
\F_9$,}$ where: 
$\alpha \in \F_9$  satisfy $\alpha^2=-1$ and
$$\begin{array}{ll} \mbox{- for } A_1(x):& \\
& N(0) = N(\alpha) = N(2\alpha) = 4,\\
& N(j) = N(\alpha+j) = N(2\alpha+j) = 2, \ j \in \{1,2\}.\\
\\
\mbox{- for } A_2(x):&\\
&N(1) = N(\alpha+1) = N(2\alpha+1) = 2, \\
&N(j) = N(\alpha+j) = N(2\alpha+j) = 4, \ j \in \{0,2\}.
\end{array}$$
\\
Then, we can deduce (see \cite{Beard2}), for a fixed positive
integer $m$, the list of all perfect polynomials of the form:
 $\displaystyle{\prod_{\gamma \in \F_9} (x-\gamma)^{N(\gamma)p^m -
1}}.$\\
The computer took some substantial time to do the job. So, we may
think that the determination of all splitting perfect polynomials
over a finite field is a non-trivial problem.
\def\thebibliography#1{\section*{\titrebibliographie}
\addcontentsline{toc}
{section}{\titrebibliographie}\list{[\arabic{enumi}]}{\settowidth
 \labelwidth{[
#1]}\leftmargin\labelwidth \advance\leftmargin\labelsep
\usecounter{enumi}}
\def\newblock{\hskip .11em plus .33em minus -.07em} \sloppy
\sfcode`\.=1000\relax}
\let\endthebibliography=\endlist

\def\biblio{\def\titrebibliographie{References}\thebibliography}
\let\endbiblio=\endthebibliography




\newbox\auteurbox
\newbox\titrebox
\newbox\titrelbox
\newbox\editeurbox
\newbox\anneebox
\newbox\anneelbox
\newbox\journalbox
\newbox\volumebox
\newbox\pagesbox
\newbox\diversbox
\newbox\collectionbox
\def\fabriquebox#1#2{\par\egroup
\setbox#1=\vbox\bgroup \leftskip=0pt \hsize=\maxdimen \noindent#2}
\def\bibref#1{\bibitem{#1}


\setbox0=\vbox\bgroup}
\def\auteur{\fabriquebox\auteurbox\styleauteur}
\def\titre{\fabriquebox\titrebox\styletitre}
\def\titrelivre{\fabriquebox\titrelbox\styletitrelivre}
\def\editeur{\fabriquebox\editeurbox\styleediteur}

\def\journal{\fabriquebox\journalbox\stylejournal}

\def\volume{\fabriquebox\volumebox\stylevolume}
\def\collection{\fabriquebox\collectionbox\stylecollection}
{\catcode`\- =\active\gdef\annee{\fabriquebox\anneebox\catcode`\-
=\active\def -{\hbox{\rm
\string-\string-}}\styleannee\ignorespaces}}
{\catcode`\-
=\active\gdef\anneelivre{\fabriquebox\anneelbox\catcode`\-=
\active\def-{\hbox{\rm \string-\string-}}\styleanneelivre}}
{\catcode`\-=\active\gdef\pages{\fabriquebox\pagesbox\catcode`\-
=\active\def -{\hbox{\rm\string-\string-}}\stylepages}}
{\catcode`\-
=\active\gdef\divers{\fabriquebox\diversbox\catcode`\-=\active
\def-{\hbox{\rm\string-\string-}}\rm}}
\def\ajoutref#1{\setbox0=\vbox{\unvbox#1\global\setbox1=
\lastbox}\unhbox1 \unskip\unskip\unpenalty}
\newif\ifpreviousitem
\global\previousitemfalse
\def\separateur{\ifpreviousitem {,\ }\fi}
\def\voidallboxes
{\setbox0=\box\auteurbox \setbox0=\box\titrebox
\setbox0=\box\titrelbox \setbox0=\box\editeurbox
\setbox0=\box\anneebox \setbox0=\box\anneelbox
\setbox0=\box\journalbox \setbox0=\box\volumebox
\setbox0=\box\pagesbox \setbox0=\box\diversbox
\setbox0=\box\collectionbox \setbox0=\null}
\def\fabriquelivre
{\ifdim\ht\auteurbox>0pt
\ajoutref\auteurbox\global\previousitemtrue\fi
\ifdim\ht\titrelbox>0pt
\separateur\ajoutref\titrelbox\global\previousitemtrue\fi
\ifdim\ht\collectionbox>0pt
\separateur\ajoutref\collectionbox\global\previousitemtrue\fi
\ifdim\ht\editeurbox>0pt
\separateur\ajoutref\editeurbox\global\previousitemtrue\fi
\ifdim\ht\anneelbox>0pt \separateur \ajoutref\anneelbox
\fi\global\previousitemfalse}
\def\fabriquearticle
{\ifdim\ht\auteurbox>0pt        \ajoutref\auteurbox
\global\previousitemtrue\fi \ifdim\ht\titrebox>0pt
\separateur\ajoutref\titrebox\global\previousitemtrue\fi
\ifdim\ht\titrelbox>0pt \separateur{\rm in}\
\ajoutref\titrelbox\global \previousitemtrue\fi
\ifdim\ht\journalbox>0pt \separateur
\ajoutref\journalbox\global\previousitemtrue\fi
\ifdim\ht\volumebox>0pt \ \ajoutref\volumebox\fi
\ifdim\ht\anneebox>0pt  \ {\rm(}\ajoutref\anneebox \rm)\fi
\ifdim\ht\pagesbox>0pt
\separateur\ajoutref\pagesbox\fi\global\previousitemfalse}
\def\fabriquedivers
{\ifdim\ht\auteurbox>0pt
\ajoutref\auteurbox\global\previousitemtrue\fi
\ifdim\ht\diversbox>0pt \separateur\ajoutref\diversbox\fi}
\def\endbibref
{\egroup \ifdim\ht\journalbox>0pt \fabriquearticle
\else\ifdim\ht\editeurbox>0pt \fabriquelivre
\else\ifdim\ht\diversbox>0pt \fabriquedivers \fi\fi\fi
.\voidallboxes}

\let\styleauteur=\sc
\let\styletitre=\it
\let\styletitrelivre=\sl
\let\stylejournal=\rm
\let\stylevolume=\bf
\let\styleannee=\rm
\let\stylepages=\rm
\let\stylecollection=\rm
\let\styleediteur=\rm
\let\styleanneelivre=\rm

\begin{biblio}{99}

\begin{bibref}{Beard2}
\auteur{J. T. B. Beard Jr}  \titre{Perfect polynomials revisited}
\journal{Publ. Math. Debrecen} \volume{38/1-2} \pages 5-12 \annee
1991
\end{bibref}

\begin{bibref}{Beard}
\auteur{J. T. B. Beard Jr, J. R. O'Connell Jr, K. I. West}
\titre{Perfect polynomials over $GF(q)$} \journal{Rend. Accad.
Lincei} \volume{62} \pages 283-291 \annee 1977
\end{bibref}

\begin{bibref}{Canaday}
\auteur{E. F. Canaday} \titre{The sum of the divisors of a
polynomial} \journal{Duke Math. J.} \volume{8} \pages 721-737 \annee
1941
\end{bibref}

\begin{bibref}{Davis}
\auteur{P. J. Davis} \titrelivre{Circulant Matrices}
\editeur{Chelsea Publishing New York, N.Y.} \anneelivre 1979
(Reprinted 1994)
\end{bibref}

\begin{bibref}{Gall-Pollack-Rahav}
\auteur{L. Gallardo, P. Pollack, O. Rahavandrainy} \titre{On a
conjecture of Beard, O'Connell and West concerning perfect
polynomials} \journal{Finite Fields Appl.} \volume{14(1)} \pages
242-249 \annee 2008
\end{bibref}

\begin{bibref}{Gall-Rahav}
\auteur{L. Gallardo, O. Rahavandrainy} \titre{On perfect polynomials
over $\F_4$} \journal{Port. Math. (N.S.)} \volume{62(1)} \pages
109-122 \annee 2005
\end{bibref}

\begin{bibref}{Gall-Rahav3}
\auteur{L. Gallardo, O. Rahavandrainy} \titre{Perfect polynomials
over $\F_4$ with less than five prime factors} \journal{Port. Math.
(N.S.) } \volume{64(1)} \pages 21-38 \annee 2007
\end{bibref}

\begin{bibref}{Gall-Rahav4}
\auteur{L. H. Gallardo, O. Rahavandrainy} \titre{Odd perfect
polynomials over $\F_2$} \journal{J. Th\'eor. Nombres Bordeaux}
\volume{19} \pages 165-174 \annee 2007
\end{bibref}

\begin{bibref}{Gall-Rahav5}
\auteur{L. H. Gallardo, O. Rahavandrainy} \titre{Even perfect
polynomials over $\F_2$ with four prime factors}
\journal{
Int. J. Pure Appl. Math.; (see also arXiv: 0712.2602);}
\volume{52, no.2} \pages 301-314
 \annee 2009
\end{bibref}

\begin{bibref}{Gall-Rahav2}
\auteur{L. H. Gallardo, O. Rahavandrainy} \titre{On splitting
perfect polynomials over $\F_{p^p}$} \journal{Preprint (2007)}
\end{bibref}

\begin{bibref}{Rudolf}
\auteur{Rudolf Lidl, Harald Niederreiter} \titrelivre{Finite Fields,
Encyclopedia of Mathematics and its Applications} \editeur{Cambridge
University Press} \anneelivre 1983 (Reprinted 1987)
\end{bibref}

\end{biblio}

\end{document}